\DeclareMathAlphabet{\mathbf}{OT1}{cmr}{bx}{it}
\newtheorem{example}{Example}[section]
\def\be#1\ee{\begin{equation}#1\end{equation}}
\newcommand{\bea}{\begin{eqnarray}}
\newcommand{\eea}{\end{eqnarray}}
\newcommand{\beas}{\begin{eqnarray*}}
\newcommand{\eeas}{\end{eqnarray*}}
\newcommand{\bfR}{\mathbb{R}}
\newcommand{\bfC}{\mathbb{C}}
\newcommand{\sn}{\mathop{\rm sn}\nolimits}
\newcommand{\dn}{\mathop{\rm dn}\nolimits}
\newcommand{\capl}{\mathop{\rm cap}\nolimits}
\newcommand{\Kei}{\mathrm{K}}
\newcommand{\srd}{{\sqrt{\delta}}}
\newcommand{\tK}{{\widetilde K}}
\newcommand{\vnull}{\boldsymbol{0}}
\newcommand{\s}{\sigma}
\newcommand{\e}{\mathrm{e}}
\newcommand{\SG}[1]{{\color{black} #1}}
\newcommand{\rev}[1]{{\color{black}{#1}}}
\title{Near-optimal perfectly matched layers\\ for indefinite  Helmholtz problems}
\author{Vladimir Druskin\thanks{Schlumberger-Doll Research, 1 Hampshire
  St., Cambridge, Massachusetts, 19104-2688
  ({\tt druskin1@boston.oilfield.slb.com}).}
  \and Stefan G\"uttel\thanks{School of Mathematics, The University of Manchester,
Alan Turing Building,
Manchester, M13\,9PL, United Kingdom ({\tt stefan.guettel@manchester.ac.uk}).}
  \and Leonid Knizhnerman\thanks{Mathematical Modelling Department,
  Central Geophysical Expedition, Narodnogo Opolcheniya St., house~38,
  building~3, 123298, Moscow, Russia ({\tt lknizhnerman@gmail.com}).}}
\begin{document}

\maketitle

\begin{abstract}
 A new construction of an absorbing boundary condition for indefinite Helmholtz problems on  unbounded domains  is presented.
 This construction is based on a near-best uniform rational interpolant of the inverse square root function
 on the union of a negative and positive real interval,  designed with the help of a classical result by Zolotarev. Using Krein's interpretation of a Stieltjes continued fraction, this interpolant can be converted into a three-term finite difference discretization of a perfectly matched layer (PML)  which  converges exponentially
 fast in the number of grid points. The convergence rate is asymptotically optimal  for both propagative and evanescent wave modes.
 Several numerical experiments and illustrations are included.
\end{abstract}

\begin{keywords}
Helmholtz equation,  Neumann-to-Dirichlet map, perfectly matched layer, rational approximation, Zolotarev problem, continued fraction
\end{keywords}

\begin{AMS}
35J05, 
65N06, 
65N55, 
30E10,
65D25
\end{AMS}

\pagestyle{myheadings}
\thispagestyle{plain}
\markboth{V.~DRUSKIN, S.~G\"UTTEL, AND L.~KNIZHNERMAN}%
{PERFECTLY MATCHED LAYERS FOR INDEFINITE HELMHOLTZ PROBLEMS}

\reversemarginpar

\section{Introduction}

An important task in science and engineering is the numerical solution
of a partial differential equation (PDE) on an unbounded spatial domain. 
\rev{Unbounded spatial domains need to be truncated for computational purposes and this turns out to be particularly difficult when the PDE models wave-like phenomena. In this case the  solution may not decay rapidly towards the truncation boundary and artificial reflections and resonances may pollute the numerical solution. The prototype of such notorious PDEs is the Helmholtz equation, which models the propagation of electromagnetic or acoustic fields from a source with a single frequency $k>0$   
\begin{equation}\label{eq:helmh}
c^2\Delta u + k^2 u = 0.
\end{equation} 
\rev{As a motivating example from geophysics, this equation may be posed 
on an unbounded half-space corresponding to the Earth's subsurface and the variable wave speed $c$ may be caused by variations in the sedimental composition, see Figure~\ref{fig:drawing}. 
In  seismic exploration a pressure wave signal of 
frequency $k$ is emitted by an acoustic transmitter  placed on the Earth's surface or below, travels through the underground, and is then logged by receivers. From these measurements geophysicists try to infer variations in the wave speed $c$ which then allows them to draw conclusions about the  subsurface composition.} Clearly, the spatial domain for this problem needs to be truncated and there are various ways for achieving this,
with a very popular approach} being known as
\emph{perfectly matched layer} (PML, see \cite{Balslev&Combes, Ber94, ChewWeedon}).

A perfectly matched layer can be seen as a localized modification of the
spatial discretization scheme to absorb the \rev{waves exiting the computational domain}.
In a finite difference framework such layers typically lead to variable complex-valued
step sizes, which is why this approach is sometimes also referred to as
\emph{complex coordinate stretching}. The aim of an efficient PML is to
achieve a strong absorption effect by adding only a few number of layers.
The aim of this work is to extend a modern finite-difference construction
of perfectly matched layers which are near-optimal for indefinite Helmholtz problems,
that is, they achieve near-best possible absorption for a given number of layers.
The number of required layers is critical in particular for large-scale simulations   of three-dimensional exterior  problems.
A variety of such problems arise, for example,  in oil and gas exploration, and 
near-optimal grids are part of almost all electromagnetic simulators used at Schlumberger \cite{davydycheva_etal2002,Abubakar_etal,Zaslavsky_etal}. \rev{
Other applications of effective discretizations of exterior domains include homogenization theory, photonic crystals, 
energy-driven pattern formation,  and the modelling of 
biologic cell communication (see, e.g.,  \cite{Muratov1,Muratov2,Muratov3}).
%
}

\begin{figure}[t]
\begin{center}
\includegraphics[width=8cm]{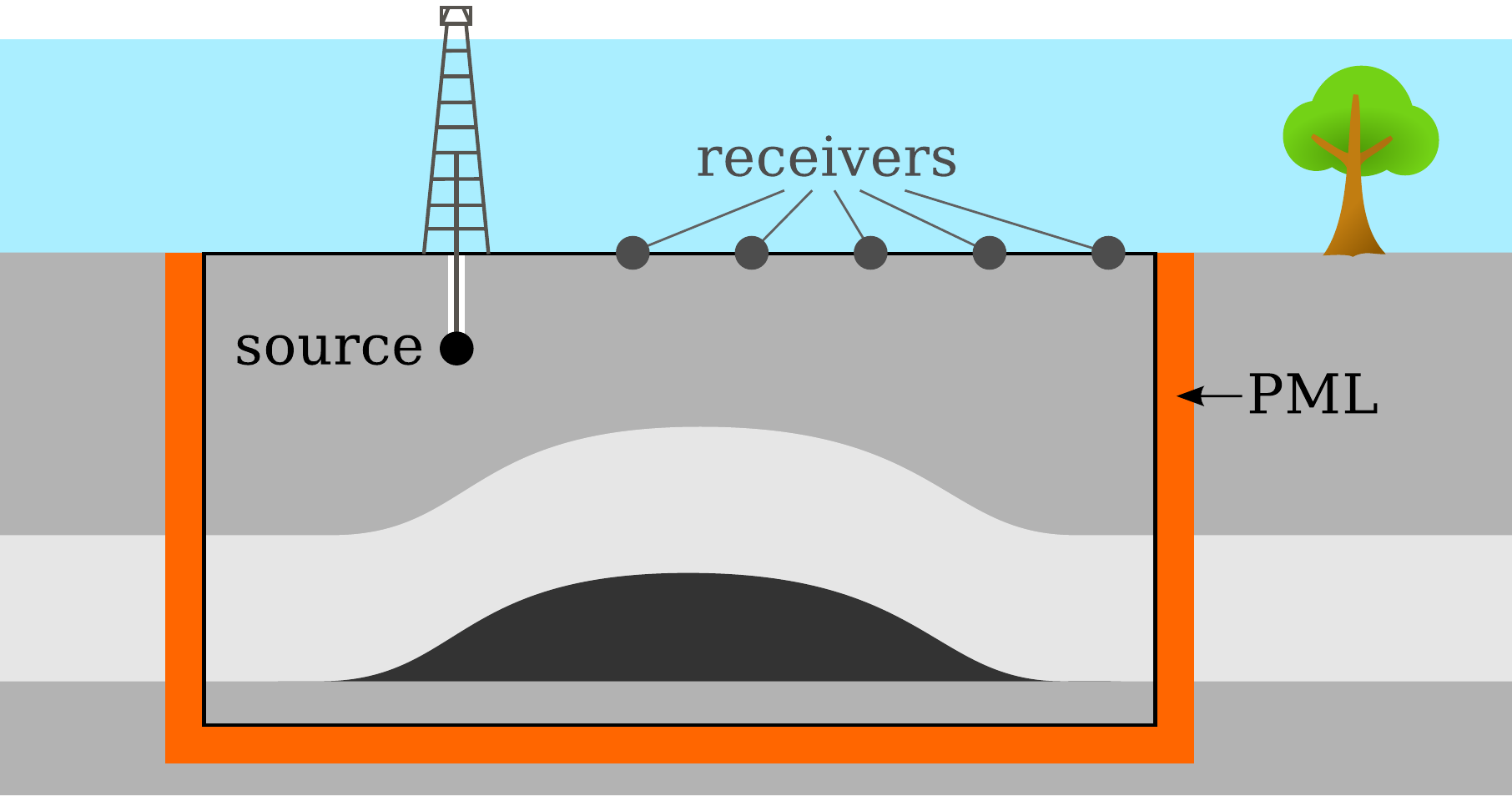}
\end{center}
\caption{A typical setup in {seismic} geophysical exploration where a
 source emits {pressure waves} into the Earth's subsurface which are then logged
at (multiple) receivers. The wave propagation is modelled by the Helmholtz equation \rev{\eqref{eq:helmh}. The varying shades of gray in the Earth's underground indicate different values of the wave speed $c=c(x,y)$. The perfectly matched layers developed in this paper allow for a variation in $c$ tangential to  the boundary of the computational domain. We will return to this example in section~\ref{sec:tensorpml}.}  
} \label{fig:drawing}
\end{figure}
%



\subsection{Outline of this work}
\SG{We will now give a short overview of this work and explain the structure of the paper.
Let us start by considering a prototype of a differential equation on an unbounded domain,}
the  two-point boundary value problem
\begin{equation}\label{eq:bvp}
    \frac{\partial^2}{\partial x^2} \mathbf{u} = \mathbf{A} \mathbf{u}, \quad
    \frac{\partial}{\partial x} \mathbf{u} \big|_{x=0} = -\mathbf{b}, \quad \mathbf{u} \big|_{x=+\infty} = \vnull,
\end{equation}
where $\mathbf{A}\in\mathbb{C}^{N\times N}$ is nonsingular and $\{\mathbf{b},\mathbf{u}(x)\}\subset\mathbb{C}^N$.
If $\mathbf{A}$ is a discretization of a differential operator on some spatial domain $\Omega \subseteq \bfR^\ell$, then (\ref{eq:bvp}) is a
semidiscretization of an $(\ell+1)$-dimensional partial differential equation on $ [0,+\infty) \times \Omega$.
Assuming that problem (\ref{eq:bvp}) is well posed (which may require some additional conditions like, e.g., the limiting absorption principle discussed below), its exact solution can be given in terms of matrix functions as $\mathbf{u}(x) = \exp(-x \mathbf{A}^{1/2})\mathbf{A}^{-1/2}\mathbf{b}$. In particular, at $x=0$ the solution is given as
\begin{equation}\label{eq:invsqrtmv}
        \mathbf{u}(0)= F(\mathbf{A}) \mathbf{b}, \quad F(z) = z^{-1/2}.
\end{equation}
The function $F(z)$ is often referred to as the \emph{impedance function} (also known as \emph{Weyl function}), and it completely characterizes the reaction of the unbounded domain
to an external force \cite{KK}.
The relation \eqref{eq:invsqrtmv} allows
 for the exact conversion of the Neumann data $-\mathbf{b}$ at the boundary $x=0$ into the Dirichlet data $\mathbf{u}(0)$, without the need for solving \eqref{eq:bvp} on its unbounded domain. \SG{This is why $F(\mathbf{A})$ is often referred to as the \emph{Neumann-to-Dirichlet (NtD) operator}.}

When solving wave scattering problems one typically \rev{deals with a
discretization matrix $\mathbf{A}$ of the negative shifted Laplacian $-c^2\Delta - k^2$ on  $\Omega \subset \bfR^\ell$. 
Under the assumption that $c$ does not depend on $x$, problem (\ref{eq:bvp}) is a semidiscretization of the indefinite Helmholtz equation \eqref{eq:helmh} on the domain $[0,+\infty)\times \Omega$.
In this case the matrix $\mathbf{A}$ is (similar to a matrix) of the form} 
\be\label{real}\mathbf{A}=\mathbf{L}-k^2\mathbf{I},\ee
where $\mathbf{L}\in\mathbb{C}^{N\times N}$ is Hermitian positive definite,
 $\mathbf{I}\in\mathbb{R}^{N\times N}$ is the identity matrix, and $k^2>0$ is not in the spectrum of $\mathbf{L}$.
For a solution of  (\ref{eq:bvp}) to be unique we impose the limiting absorption principle (see, e.g., \cite{Taylor}).
This means that for a real number $k$ we define $\mathbf{u}$ as a limit of solutions $\mathbf{u}^{(k + i\epsilon)}$ of \eqref{eq:bvp} with wave numbers
$k + i\epsilon$ ($\epsilon>0$) instead of $k$, i.e.,
\begin{equation}\label{eq:lap}
\mathbf{u} = \lim_{\epsilon \searrow 0} \mathbf{u}^{(k + i\epsilon)}.
\end{equation}
This uniquely defines
the value $F(\mathbf{A})=\mathbf{A}^{-1/2}$, notwithstanding that some eigenvalues
of $\mathbf{A}$ may lie on the standard branch cut \SG{of $F(z)$}.

We will now  outline our construction in the following sections, which combines ideas of the eminent mathematicians Y.\,I.~Zolotarev (1847--1878), T.\,J.~Stieltjes (1856--1894), and M.\,G.~Krein (1907--1989). 
\SG{The main aim in section~\ref{sec:contruction} is to approximate $F(z)$ by a rational interpolant $R_n(z)$ of type $(n-1,n)$, so that $R_n(\mathbf{A})$ can be seen as an approximate NtD operator, mapping the Neumann data $-\mathbf{b}$ to the  Dirichlet data $R_n(\mathbf{A}) \mathbf{b}$.
 Clearly, the \rev{weighted} 2-norm approximation error of this map is
\[\|R_n(\mathbf{A}) \mathbf{b}-F(\mathbf{A})\mathbf{b}\| = \sqrt{\sum_{j=1}^N | R_n(\lambda_j-k^2)-F(\lambda_j-k^2) |^2 |b_j|^2},\]
where $b_j= \mathbf{v}_j^*\mathbf{b}$ and $(\lambda_j,\mathbf{v}_j)$ are the eigenpairs of $\mathbf{L}$ with $\|\mathbf{v}_j\|=1$. 
We have $\lambda_1<k^2<\lambda_N$ and thus arrive at the problem of scalar rational approximation of $F(z)$ on
the union of a positive and a negative real interval.
Our rational interpolant $R_n(z)$ is obtained
by combining two optimal Zolotarev interpolants constructed for the two intervals separately.
 For illustration purposes we have graphed the relative error of such a function in Figure~\ref{fig:example1}.
In addition to the \rev{explicit} construction of such approximants, section~\ref{sec:contruction} also contains a novel detailed  convergence analysis, with the more technical proofs given in the appendix.}

In section \ref{sec:grids} we will show that the rational function $R_n(z)$ can be converted into an equivalent three-term finite difference scheme on a nonuniform grid with $n$ points. This is achieved by formally rewriting $R_n(z)$ as a Stieltjes continued fraction  and using Krein's interpretation of that fraction as a finite-difference scheme. However, due to the non-Stieltjes nature  of $R_n(z)$  (its poles may lie on a curve in complex plane, as shown in Figure~\ref{fig:example1e})  the continued fraction coefficients  can also be complex, which results in a finite difference scheme with complex-valued grid steps.
This scheme allows for the simple and  efficient computation of an NtD map and the construction of an absorbing boundary layer for indefinite Helmholtz problems.
The  
near-optimality of $R_n(z)$ implies that
the number of required grid points is close to smallest possible.
A summary of an algorithm for computing this grid is given in section~\ref{sec:algsumm}.



Section~\ref{sec:impl} is devoted to the adaptation of our PML construction to a second-order finite difference framework. In particular, in section~\ref{sec:discr}, we extend  {our optimal} rational approximation approach to the infinite lattice problem. \rev{Our analysis carries over to this problem, thereby providing a novel theoretical justification for the exponential error reduction in our perfectly matched layer as the number of grid points increases.}

Finally, in section~\ref{sec:numex} we demonstrate the high accuracy and exponential convergence of our
perfectly matched layer with several numerical examples.

\begin{figure}[t]
\begin{center}
\includegraphics[width=0.75\textwidth]{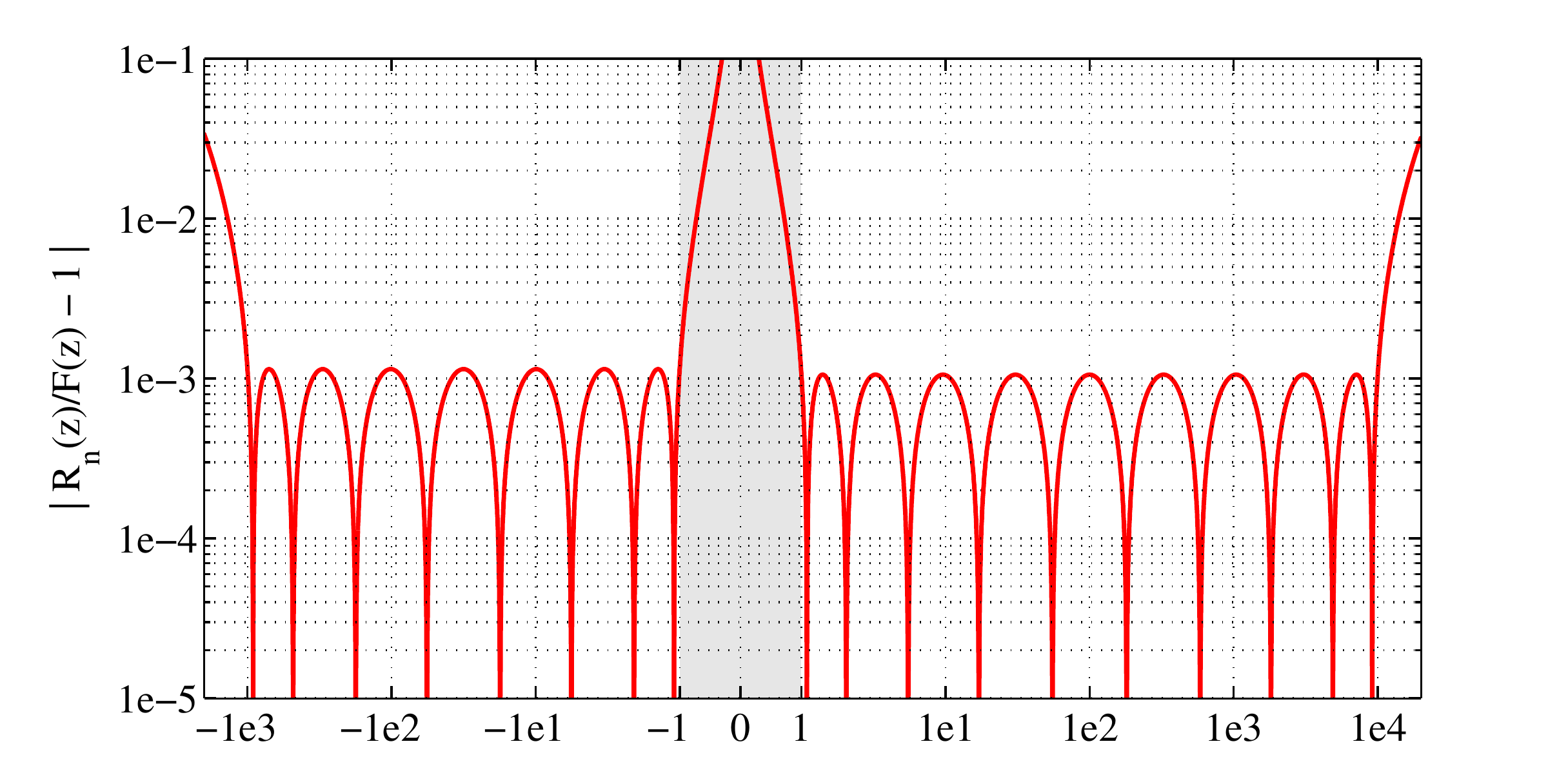}
\end{center}
\caption{Relative error $| R_n(z)/F(z) - 1 |$ of a rational approximant $R_n(z)$ for $F(z)=z^{-1/2}$ on $[-1\e3,-1]\cup [1,1\e4]$. We have adopted a special plotting type for simultaneously visualizing large intervals
on the negative and positive real semiaxes in logarithmic scales, with the gray linear region in the middle gluing the two intervals together. The rational function $R_n(z)$ is of type $(n-1,n)$, $n=9$, and
 it has been constructed by combining two Zolotarev interpolants with $m_1=8$ and $m_2=10$ interpolation nodes for the  negative and positive intervals, respectively. Visually the solution of our complex rational approximation problem  behaves similarly to the max-norm optimal errors of  the real  problems, i.e., it shows ``equal ripples'' on the targeted intervals  \emph{(}although the Chebyshev alternation theory
\emph{\cite[Ch.~II]{Akhiezer}} is not applicable in the complex case \emph{\cite{Varga}}\emph{)}.} \label{fig:example1}
\end{figure}

\subsection{Review of related work}

It was already shown in \cite{DK99,IDK00} that a rational approximant $R_n(z)$ of type
$(n-1,n)$ for the function $F(z)$ can be converted into an equivalent three-term finite difference scheme
on a special nonuniform grid with $n$ points, mapping the Neumann data $-\mathbf{b}$ to the  Dirichlet data $R_n(\mathbf{A}) \mathbf{b}$.
In these papers the authors were mainly concerned with a special instance of \eqref{eq:bvp} where $\mathbf{A}$ corresponds to a discretization of the negative Laplacian $-\Delta$,
in which case $\mathbf{A}$ is a real symmetric positive  definite matrix.
The error of the approximate Neumann-to-Dirichlet (NtD) map is then bounded by the maximum  of  $|R_n(z) - F(z)|$ on the
positive spectral interval of $\mathbf{A}$. Approximation theory allows for the construction of exponentially convergent rational functions $R_n(z)$ with a
convergence rate weakly dependent on  the condition number of $\mathbf{A}$,
thus producing  a  three-term finite difference scheme with a so-called \emph{optimal grid}   (also known as \emph{finite-difference Gaussian rule} or \emph{spectrally matched grid}). The connection of $R_n(z)$ and this grid is inspired by Krein's mechanical interpretation of a Stieltjes continued fraction \cite{KK}. It was shown in \cite{sharichka} that the same grids produce exponentially convergent NtD maps even for problems arising from the semidiscretization of
\emph{anisotropic} elliptic PDEs and systems with mixed second-order terms, i.e., when the second-order ODE system
in  (\ref{eq:bvp}) is modified by adding a first-order term.

{It should be noted that the positive and negative eigenmodes {of $\mathbf{A}$} correspond to so-called \emph{evanescent} and \emph{propagative solutions} $\eta(\mathbf{A})\mathbf{u}(x)$ and $\eta(-\mathbf{A})\mathbf{u}(x)$, respectively,
with $\eta(s)$ denoting the Heaviside step function.
The evanescent modes, i.e., the nonzero eigenmodes in the spectral decomposition of $\eta(\mathbf{A})\mathbf{u}(x)$, decay exponentially
as $x$ increases (hence the name). Therefore a simple, though possibly not the most efficient, way to absorb them is to truncate the domain at some (sometimes quite significant) distance from the targeted area of interest, and then to deal with the propagative modes alone. On the other hand, the norm $\|\eta(-\mathbf{A})\mathbf{u}(x) \|$ 
 does actually not depend on $x$, so simple boundary truncation will not be effective for absorbing propagative modes.}

In their  seminal paper \cite{EM79}, Enquist and Majda computed $R_n(z)$ as a Pad\'e approximant of $F(z)$ at some real negative point and then evaluated it via continued fraction-type recursions. This approach yielded exponential convergence on the negative real semiaxis, however, with the rate quickly deteriorating towards the origin.   Another celebrated approach for absorbing  propagative modes is called \emph{complex scaling} and was originally introduced in \cite{Balslev&Combes}
for molecular physics calculations. It is also known as \emph{perfectly matched layer} (PML), a term  coined in the influential  work \cite{Ber94}, where it was independently rediscovered and adapted for time-domain wave propagation. We will use the latter term because it seems to be more established in the wave propagation literature.  The well-posedness of the PML formulation was studied in \cite{Joly, Hagstrom}.
The essence of the PML approach is a complex coordinate transformation
which changes purely imaginary exponentials of propagative modes  to complex decaying ones, thus, in principle, allowing reflectionless domain truncation \cite{Balslev&Combes,ChewWeedon}. However, coarse PML discretizations introduce undesirable numerical reflections which   decay  rather slowly with the grid size in case of low-order discretization schemes.  This problem was partially circumvented in \cite{Asvadurov_etal} for the solution of time-domain wave problems, where the optimal gridding approach  was extended to PML discretizations. By choosing an appropriate \emph{purely imaginary} grid this approach allowed for the construction of all possible rational interpolants $R_n(z)$ for  $F(z)$  on a real negative interval, including the Pad\'e approximants
constructed in \cite{EM79}, and preferably the  best uniform  approximants targeting the spectral support of the expected  solution. See also \cite{Lisitsa} and \cite{DrRemis} for  adaptations of the optimal gridding approach to  the hyperbolic elasticity system  and the Helmholtz equation, respectively.  A non-optimal PML layer for absorbing both evanescent and propagative modes in dispersive wave equations has been proposed in \cite{ZG06}.
However, the problem of designing discrete PMLs which are optimal for both wave modes remained open.

 Our construction in section~\ref{sec:contruction} is inspired by a ``trick'' originally used by Zolotarev and Newmann,  writing the relative approximation error $R_n(z)/F(z) - 1$ in terms of
$H_m(s)/H_m(-s)$, where $H_m(s)$ is a polynomial of degree $m=2n$, $s^2 = z$.  This  trick  was rediscovered in \cite{GT00,GL06}, where $H_m(s)/H_m(-s)$ was identified with the numerical reflection coefficient,  and a continued-fraction absorbing condition was explicitly constructed in terms of the roots of $H_m(s)$ and introduced in the PDE discretization via a so-called trapezoid finite element method. However, these important papers fell short of introducing optimal approximants. In addition to the construction of these approximants,
 section~\ref{sec:contruction} also contains a novel detailed  convergence analysis. To make our paper more pleasant to read we have decided to present the technical proofs in an appendix.

In an unfinished report \cite{DGH}, the authors suggested to split $H_m(s)$ into the product of polynomials with real and imaginary roots, thus decoupling the approximation problems on the positive and negative intervals. It was then suggested to apply conventional optimal rational approximants on
each of the two intervals, and the resulting error was only determined by the largest error of  these two approximants. A drawback of such an approach is that it  requires the splitting of the PML grid into two subdomains with nonlocal finite difference stencils at the conjugation interfaces.

\section{Construction of a near-optimal approximant on two intervals}\label{sec:contruction}

The function $z^{-1/2}$ is commonly defined in the complex plane $\bfC$
with the slit $(-\infty,0]\subset\bfR$. However, in our application we
need an analytic continuation
$F(z)$ of $z^{-1/2}$ from the
positive real semiaxis $\bfR_+=\{x\in\bfR\mid x>0\}$ to $-\bfR_+$ in accordance with the limiting absorption principle (\ref{eq:lap}), i.e., attaining the values
\be \label{fnss}\nonumber
F(z)=-i(-z)^{-1/2} \qquad \text{for} \quad z\in -\bfR_+,
\ee
and the principal value of the square root for  $z\in \bfR_+$. We will therefore assume in the following that $F(z)$ is defined in $\bfC$ with the branch cut in the lower half-plane.

Following~\cite{DGH}, we now construct a rational interpolant $R_n(z)$ of type $(n-1,n)$ to $F(z)$ on the union
$K$ of two real intervals
\bea \label{apprset}\nonumber
K=K_1\cup K_2, \quad K_1=[a_1,b_1], \quad K_2=[a_2,b_2], \\
a_1<b_1<0<a_2<b_2, \nonumber
\eea
using solutions of a classical Zolotarev problem on each of the two intervals.
In view of the definition (\ref{real}), these intervals will correspond to the  spectral subintervals $[\lambda_1-k^2,\lambda_{i_0}-k^2]$   and $[\lambda_{{i_0}+1}-k^2,\lambda_N-k^2]$ (or their estimates), respectively,
where
$\lambda_1\leq \cdots\leq \lambda_{i_0}<k^2<\lambda_{i_0+1}\leq\cdots\leq \lambda_N$.

Separating the odd and even parts of a polynomial
$H_m$ of degree $m=2n$, we define polynomials $P_{n-1}$ and $Q_n$ of degrees
$\le n-1$ and $n$, respectively, such that
\be \label{defpq}
H_m(s)=-s \, P_{n-1}(s^2)+Q_n(s^2).
\ee
The rational function
\be \label{defR}
R_n(z) = \frac{P_{n-1}(z)}{Q_n(z)}
\ee
will be considered as an approximant for $F(z)$
on $K$. We have
\be \label{apprerr}
sR_n(s^2)=\frac{s\, P_{n-1}(s^2)}{Q_n(s^2)} = \frac{H_m(-s)-H_m(s)}{H_m(-s)+H_m(s)}\, ,
\ee
and thereby obtain an expression of the relative averaged approximation error as
\[2 \frac{\left|F(s^2)-R_n(s^2)\right|}{\left| F(s^2)+R_n(s^2)\right|}=2 \left|\frac{H_m(s)}{H_m(-s)}\right|.\]
Following \cite[Section~2]{DGH}, we can split the  approximation problem on $K$ into two independent problems on $K_1$ and $K_2$.

\begin{lemma}\label{split}
Let $m_1$ and $m_2$ be positive integers such that $m=m_1+m_2$, and let
$H_{m_1}$ and $H_{m_2}$ be polynomials of degrees $m_1$ and $m_2$  with roots on $F(K_1)$ and $F(K_2)$,  respectively. Define
\[
H_m(s)=H_{m_1}(s)H_{m_2}(s).
\]
Then \[
\max_{s\in F(K_1)} \left|\frac{H_m(s)}{H_m(-s)}\right|
= \max_{s\in F(K_1)}\left|\frac{H_{m_1}(s)}
{H_{m_1}(-s)}\right|\phantom{.}
\]
and
\[
\max_{s\in F(K_2)} \left|\frac{H_m(s)}{H_m(-s)}\right|
= \max_{s\in F(K_2)}\left|\frac{H_{m_2}(s)}
{H_{m_2}(-s)}\right|.
\]\end{lemma}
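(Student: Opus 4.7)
The plan is to exploit the multiplicative structure $H_m(s)/H_m(-s) = \bigl(H_{m_1}(s)/H_{m_1}(-s)\bigr)\cdot\bigl(H_{m_2}(s)/H_{m_2}(-s)\bigr)$ and then verify that, on each of the two target sets $F(K_1)$ and $F(K_2)$, precisely one of the two factors is identically equal to $1$ in modulus. This uses only the geometry of the image sets under $F$: by the definition of $F$ via the limiting absorption principle, $F(K_2) \subset \mathbb{R}_+$ lies on the positive real axis, whereas $F(K_1) \subset -i\mathbb{R}_+$ lies on the negative imaginary axis.

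First I would record a simple observation about polynomials with roots on a line through the origin: if $p(s) = c \prod_j (s - r_j)$ has all roots on the real axis, then for $s = it$ purely imaginary we have $|p(it)| = |c|\prod_j \sqrt{t^2 + r_j^2} = |p(-it)|$; similarly, if all roots lie on the imaginary axis $r_j = i\tau_j$, then for $s = t$ real we get $|p(t)| = |c|\prod_j \sqrt{t^2 + \tau_j^2} = |p(-t)|$. Consequently, for any such polynomial $|p(s)/p(-s)| = 1$ whenever $s$ lies on the line orthogonal to the line containing its roots.

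Applying this, since $H_{m_2}$ has roots in $F(K_2) \subset \mathbb{R}$, we have $|H_{m_2}(s)/H_{m_2}(-s)| = 1$ for every $s \in F(K_1) \subset i\mathbb{R}$; factoring out this trivial ratio gives
\[
\max_{s\in F(K_1)} \left|\frac{H_m(s)}{H_m(-s)}\right|
= \max_{s\in F(K_1)}\left|\frac{H_{m_1}(s)}{H_{m_1}(-s)}\right|.
\]
Symmetrically, since $H_{m_1}$ has roots in $F(K_1) \subset i\mathbb{R}$, we have $|H_{m_1}(s)/H_{m_1}(-s)| = 1$ for every $s \in F(K_2) \subset \mathbb{R}$, yielding the second identity.

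There is essentially no obstacle here: the statement is a clean algebraic decoupling, and the only subtlety is being careful about the geometric location of the zero sets $F(K_1)$ and $F(K_2)$ — specifically, that they are perpendicular lines through the origin, which is exactly what makes the cross-factor cancellation possible. Once the image sets under $F$ are correctly identified using the branch convention \eqref{fnss}, the remainder is a one-line modulus computation on each factor.
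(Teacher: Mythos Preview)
Your proof is correct and follows exactly the same approach as the paper: both arguments reduce to the observation that $|H_{m_1}(s)/H_{m_1}(-s)|=1$ for $s\in F(K_2)$ and $|H_{m_2}(s)/H_{m_2}(-s)|=1$ for $s\in F(K_1)$, which you justify with a slightly more detailed modulus computation than the paper provides.
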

 \begin{proof}
This lemma immediately follows from the equalities
\[
\left|\frac{H_{m_1}(s)}
{H_{m_1}(-s)}\right|=1
\qquad \mbox{if} \quad
s\in F(K_2),
\]
and reciprocally
\[
\left|\frac{H_{m_2}(s)}
{H_{m_2}(-s)}\right|=1
\qquad \mbox{if} \quad
s\in F(K_1).
\]\end{proof}

 Let us  consider a single real interval $[c,d]$ with $0<c<d$, and
the problem of finding a real monic polynomial $Z^{(c,d)}_m$ of degree $m\geq 1$ (denoted as $Z^{(c,d)}_m\in \mathcal{P}_{m,\mathrm{real}}$) which attains the minimum in the Zolotarev problem
\be \label{inZolpr}
E^{(c,d)}_m=
\min_{Z\in \mathcal{P}_{m,\mathrm{real}}}
\ \max_{c\le s\le d} \left|\frac{Z(s)}{Z(-s)}\right|.
\ee
 It is known from \cite{Zol,ML05} that this minimizer $Z^{(c,d)}_m$ exists uniquely, that
 its roots $s^{(c,d)}_j$ ($j=1,\ldots,m$) are located in $(c,d)$, and that they are expressible in terms of elliptic integrals. More details are given in the appendix, in particular, formula (\ref{zolotar}).

We choose positive integers $m_1$ and $m_2$ and introduce the polynomial
\be \label{defh}
H_m(s)=Z^{(\sqrt{-b_1},\sqrt{-a_1})}_{m_1}(-is)\cdot
Z^{(\sqrt{a_2},\sqrt{b_2})}_{m_2}(s)
\ee
of degree $m=m_1+m_2$. From Lemma~\ref{split}
we obtain the following result.

\begin{proposition}\label{main0}
The polynomial $H_m(s)$ defined in \eqref{defh} satisfies
\[
\max_{s\in F(K)} \left|\frac{H_m(s)}{H_m(-s)}\right|=\max\left\{E^{(\sqrt{-b_1},\sqrt{-a_1})}_{m_1}\ , \ E^{(\sqrt{a_2},\sqrt{b_2})}_{m_2}\right\}.
\]
\end{proposition}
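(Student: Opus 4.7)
The plan is to reduce the assertion to Lemma~\ref{split} after making $F(K)$ explicit and checking that the two factors in \eqref{defh} have their roots on the appropriate arcs. The identity \eqref{apprerr} is derived under the branch choice $F(s^2)=1/s$; solving $1/s=F(z)$ on each piece of $K$ gives $s=\sqrt{z}\in[\sqrt{a_2},\sqrt{b_2}]$ for $z\in K_2$ and $s=i\sqrt{-z}\in i[\sqrt{-b_1},\sqrt{-a_1}]$ for $z\in K_1$, so $F(K)=F(K_1)\cup F(K_2)$ is the disjoint union of a positive imaginary segment and a positive real segment. The factor $H_{m_2}(s)=Z^{(\sqrt{a_2},\sqrt{b_2})}_{m_2}(s)$ has all its roots in $[\sqrt{a_2},\sqrt{b_2}]=F(K_2)$ by the known location of Zolotarev roots, while $H_{m_1}(s)=Z^{(\sqrt{-b_1},\sqrt{-a_1})}_{m_1}(-is)$ has roots $i\,s_j^{(\mathrm{Zol})}\in i[\sqrt{-b_1},\sqrt{-a_1}]=F(K_1)$, since the substitution $-is=s_j^{(\mathrm{Zol})}$ yields $s=i\,s_j^{(\mathrm{Zol})}$.

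With these verifications in place, Lemma~\ref{split} applies to the product $H_m=H_{m_1}H_{m_2}$ and decomposes the maximum over $F(K)$ into the larger of the two maxima, one over $F(K_1)$ of $|H_{m_1}(s)/H_{m_1}(-s)|$ and one over $F(K_2)$ of $|H_{m_2}(s)/H_{m_2}(-s)|$. The second of these is already of the form \eqref{inZolpr} for the optimal polynomial on $[\sqrt{a_2},\sqrt{b_2}]$, hence equals $E^{(\sqrt{a_2},\sqrt{b_2})}_{m_2}$. The first is converted by the change of variable $t=-is$, which bijectively maps $F(K_1)$ onto $[\sqrt{-b_1},\sqrt{-a_1}]$ and turns the ratio into $Z^{(\sqrt{-b_1},\sqrt{-a_1})}_{m_1}(t)/Z^{(\sqrt{-b_1},\sqrt{-a_1})}_{m_1}(-t)$; its maximum modulus on this real reference interval is $E^{(\sqrt{-b_1},\sqrt{-a_1})}_{m_1}$ by definition \eqref{inZolpr}. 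Combining the two partial maxima yields the claimed equality.

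The only real obstacle is bookkeeping with branches and signs: committing consistently to the branch $F(s^2)=1/s$ that underlies \eqref{apprerr}, checking that the rotation $s\mapsto-is$ sends the real reference interval of the first Zolotarev problem precisely onto $F(K_1)\subset i\bfR_+$ rather than onto its reflection, and verifying that all roots of both Zolotarev factors land on the correct arcs so that the hypotheses of Lemma~\ref{split} are met. Once these identifications are fixed, the remainder is a direct invocation of that lemma together with the defining property of the Zolotarev minimum.
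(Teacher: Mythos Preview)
Your proof is correct and follows the same route as the paper, which simply states that Proposition~\ref{main0} is obtained from Lemma~\ref{split}; you have supplied exactly the bookkeeping (identification of $F(K_1)$ and $F(K_2)$, location of the roots of the two Zolotarev factors, and the rotation $t=-is$ reducing the $K_1$-maximum to the real Zolotarev value) that the paper leaves implicit. One tiny slip in your closing paragraph: the map $s\mapsto -is$ sends $F(K_1)\subset i\bfR_+$ to the real reference interval, not the other way around, but your main argument uses the substitution in the correct direction.
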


It is well known that the classical Zolotarev functions in \eqref{inZolpr} converge exponentially.
 Let us denote by
$\rho^{(\delta)}$ the Cauchy--Hadamard convergence rate of $Z^{(c,d)}_{m}$, i.e.,
\be\label{rhodelta}\nonumber
\rho^{(\delta)}=\lim_{m\to\infty}\sqrt[m] {E^{(c,d)}_m}, \quad \delta=c/d.\ee
An exact expression of $\rho^{(\delta)}$ in terms of elliptic integrals is given in (\ref{rho12}).
For  small \emph{interval ratios} $\delta$ one can derive a simple approximate expression
\[\rho^{(\delta)}  \approx \exp\left(-\frac{\pi^2}{4\log\frac{2}{\srd}}\right)
\]
in terms of elementary functions  \cite[Appendix~A]{IDK00}.
 This expression shows the weak dependence of the  Cauchy--Hadamard convergence rate on the interval ratio $\delta$.

In view of Proposition~\ref{main0}, $m_1$ and $m_2$ should be chosen to balance the errors of both Zolotarev functions. One way of achieving this is by setting
\be \label{eq:rho12}
\rho_1=\rho^{(\sqrt{b_1/a_1})}, \qquad \rho_2=\rho^{(\sqrt{a_2/b_2})}
\ee
and
\be \label{m12}
m_1= m\cdot \frac{\log\rho_2}{\log\rho_1+\log\rho_2}+\theta, \quad m_2=m-m_1,  \quad |\theta|\le 1/2,\ee
where $\theta$ is chosen to round  $(m \log\rho_2) / (\log\rho_1+\log\rho_2)$ to the nearest integer. We are now in the position to formulate a near-optimality result for the obtained approximant.

\smallskip

\begin{theorem} \label{T2}
Let us denote
\be \label{defrho}
\rho=\exp\left(\frac{\log\rho_1\log\rho_2}{\log\rho_1+\log\rho_2}
\right).
\ee
Let  the polynomial $H_m$ be defined by (\ref{defh}), the polynomials $P_{n-1}$
and $Q_n$ defined by (\ref{defpq}), the rational fraction $R_n$ defined by (\ref{defR}), and $m=2n$.
 Further let the conditions
(\ref{m12}) and
\be \label{T2cond}
2\max\left\{\rho_1^{-1/2},\rho_2^{-1/2}\right\}\rho^m<1
\ee
be satisfied. Then the upper relative error bound
\be \label{T2res1}
\max_{z\in K} \left|\frac{R_n(z)}{F(z)}-1\right|
\le \frac{4\max\left\{\rho_1^{-1/2},\rho_2^{-1/2}\right\}\rho^m}
{1-2\max\left\{\rho_1^{-1/2},\rho_2^{-1/2}\right\}\rho^m}
\ee
holds.
On the other hand, if $P$ and $Q\not\equiv 0$ are arbitrary polynomials of
degrees $\le n-1$ and $\le n$, respectively, then $R=P/Q$ satisfies the lower error bound
\be \label{T2res2}
\max_{z\in K} \left|\frac{R(z)}{F(z)}-1\right|
\ge \frac{2\rho^m}{1+\rho^m}.
\ee
\end{theorem}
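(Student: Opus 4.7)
The plan is to reduce both bounds to estimates of $|H_m(s)/H_m(-s)|$, which through identity (\ref{apprerr}) controls the ``averaged'' relative error $2|F-R|/|F+R|$, and then to convert that averaged error into the standard relative error $|R/F-1|$ by elementary triangle-inequality manipulations. Writing $\alpha(s):=|R_n(s^2)/F(s^2)-1|$ and $\eta(s):=2|F(s^2)-R_n(s^2)|/|F(s^2)+R_n(s^2)|$, the identities $|F+R|\ge 2|F|-|F-R|$ and $|F+R|\le 2|F|+|F-R|$ yield the two-sided pointwise relation $2\alpha/(2+\alpha)\le\eta\le 2\alpha/(2-\alpha)$, which I would invert in the appropriate direction in each half of the proof.

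For the upper bound I would start from Proposition~\ref{main0} applied to the specific polynomial $H_m$ from (\ref{defh}), giving
\[
\max_{s\in F(K)}\Bigl|\frac{H_m(s)}{H_m(-s)}\Bigr|=\max\bigl\{E^{(\sqrt{-b_1},\sqrt{-a_1})}_{m_1},\,E^{(\sqrt{a_2},\sqrt{b_2})}_{m_2}\bigr\}.
\]
I would then invoke the classical single-interval Zolotarev upper bound $E^{(c,d)}_m\le 2\rho^m$ with $\rho=\rho^{(c/d)}$, to be proved in the appendix from the explicit elliptic-function formula for $Z^{(c,d)}_m$. The balance condition (\ref{m12}) gives $m_i\log\rho_i=m\log\rho+\theta_i\log\rho_i$ with $|\theta_i|\le 1/2$, and since $\rho_i<1$ this implies $\rho_i^{m_i}\le\rho^m\rho_i^{-1/2}$; hence $\max|H_m/H_m(-\cdot)|\le 2\rho^m\max\{\rho_1^{-1/2},\rho_2^{-1/2}\}=:2X$. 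The pointwise upper bound $\alpha\le 2\eta/(2-\eta)$ together with the monotonicity of the right-hand side in $\eta$ then gives $\max\alpha\le 8X/(2-4X)=4X/(1-2X)$, which is exactly (\ref{T2res1}); condition (\ref{T2cond}), i.e.\ $2X<1$, ensures positivity of the denominator.

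For the lower bound, given any $R=P/Q$ of type $(n-1,n)$ the decomposition (\ref{defpq}) produces an associated polynomial $H_m$ of degree $\le m=2n$ satisfying identity (\ref{apprerr}). Granting the key two-interval estimate
\[
\max_{s\in F(K)}\Bigl|\frac{H_m(s)}{H_m(-s)}\Bigr|\ge \rho^m \quad\text{for every polynomial }H_m\text{ of degree }\le m,
\]
the pointwise lower bound $\alpha\ge 2\eta/(2+\eta)$, together with $\max\eta\ge 2\rho^m$ and the monotonicity of $2\eta/(2+\eta)$, yields $\max\alpha\ge 2\rho^m/(1+\rho^m)$, proving (\ref{T2res2}).

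The main obstacle is the two-interval lower estimate just displayed. Because $F(K_1)$ sits on the imaginary axis while $F(K_2)$ lies on the positive real axis, the classical one-interval Zolotarev lower bound does not apply directly; moreover the optimal rate $\rho$, being the harmonic-type combination (\ref{defrho}) of $\log\rho_1$ and $\log\rho_2$, is strictly larger than either $\rho_i$ individually, so the bound cannot be obtained merely by restricting to one of $F(K_1)$ or $F(K_2)$. I would handle this in the appendix by a potential-theoretic argument, identifying the infimum of $\max_{s\in F(K)}|H_m(s)/H_m(-s)|$ over degree-$m$ polynomials with $\exp(-m\,g_\infty)$, where $g_\infty$ is the value at infinity of the Green's function of $\bfC\setminus(F(K)\cup(-F(K)))$, and then computing $g_\infty=-\log\rho$ explicitly via the same elliptic integrals that underlie the Zolotarev construction.
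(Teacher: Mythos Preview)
Your reduction to $|H_m(s)/H_m(-s)|$ and the conversion between $\eta$ and $\alpha$ match the paper exactly; the paper writes the identity $H(s)/H(-s)=-(R/F-1)\big/\big((R/F-1)+2\big)$ and bounds numerator and denominator directly, which amounts to your two-sided inequality $2\alpha/(2+\alpha)\le\eta\le2\alpha/(2-\alpha)$. The upper-bound argument---Proposition~\ref{main0}, the single-interval Zolotarev estimate $E^{(c,d)}_m\le2(\rho^{(\delta)})^m$ from~(\ref{Ecdest}), and the balance computation $\rho_i^{m_i}\le\rho^m\rho_i^{-1/2}$ from~(\ref{m12})---is also precisely the paper's.

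Where you diverge is in establishing the key lower estimate $\max_{s\in\tK}|H(s)/H(-s)|\ge\rho^m$ (this is Lemma~\ref{T1}, inequality~(\ref{T1res2}) in the appendix). The paper does \emph{not} attempt to compute any Green's function or condenser capacity for $\tK\cup(-\tK)$ from elliptic integrals. Instead it proceeds in two steps. First, by symmetry of the condenser $(\tK,-\tK)$ one may pick a Cauchy--Hadamard optimal sequence of the form $H(s)/H(-s)$ with all roots of $H$ lying in $\tK$; splitting those roots between $\tK_1$ and $\tK_2$ and using that $|H^{(j)}(s)/H^{(j)}(-s)|=1$ on the other subinterval reduces everything to the two single-interval Zolotarev lower bounds from~(\ref{Ecdest}), yielding $\max_{\tK}|H/H(-\cdot)|^2\ge\max\{\rho_1^{2m_1},\rho_2^{2m_2}\}\ge\rho^{2m}$ along this sequence, hence $\exp\big(-1/\capl(\tK,-\tK)\big)\ge\rho^2$. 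Second, Gonchar's \emph{non-asymptotic} inequality \cite[Theorem~1, formula~(11)]{Gonchar69} upgrades this capacity bound to $\max_{\tK}|H/G|\cdot\max_{-\tK}|G/H|\ge\rho^{2m}$ for \emph{every} pair $H,G\in\mathcal{P}_m$, and the choice $G(s)=H(-s)$ together with the symmetry $\max_{-\tK}|H(-s)/H(s)|=\max_{\tK}|H(s)/H(-s)|$ finishes.

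Your plan would need both ingredients made explicit. The phrase ``identifying the infimum with $\exp(-mg_\infty)$'' describes only the asymptotic rate; the per-$m$ inequality you actually need is precisely Gonchar's non-asymptotic result, which must be invoked separately. And computing the capacity of the four-segment condenser $(\tK,-\tK)$ directly via conformal maps and elliptic integrals is considerably more involved than the paper's splitting trick, which recycles the already-available one-interval Zolotarev bounds and never touches the two-interval capacity explicitly. That splitting argument is the essential idea you are missing.
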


This theorem, whose proof is given in the appendix, implies that the upper error bound for our \emph{Zolotarev approximant $R_n(z)$} and the lower bound for the best possible approximant have the same Cauchy--Hadamard convergence rate
$\rho$, i.e., our approximant is asymptotically optimal in the Cauchy--Hadamard sense.
  As  is also demonstrated by the following numerical example (and the corresponding  Table~\ref{tab:errors}), the Zolotarev approximant can be worse than the best possible approximant only by   a moderate factor. We should point out that, unlike their real counterparts,  complex max-norm optimal rational approximation problems  are generally not  convex and may have non-unique solutions \cite{Varga}. It therefore seems unlikely that the near-optimality result of Theorem~\ref{T2}  can be improved  significantly.


\begin{example}
Let us, as in Figure~\ref{fig:example1}, consider the problem of approximating $F(z) = z^{-1/2}$ by a rational function $R_n(z)$ of type $(n-1,n)$ on the union of two intervals $K = [a_1,b_1]\cup [a_2,b_2] = [-1\e3,-1]\cup [1,1\e4]$. Using the exact formula \eqref{rho12}  we  calculate
\[
    \rho_1 \approx  0.361, \quad \rho_2 \approx  0.439, \quad \rho \approx  0.634.
\]
In Table~\ref{tab:errors} we list the error bounds of Theorem~\ref{T2} for various values of $m = 2n$ together with the actual approximation error.
The calculations confirm the bounds and show that they are roughly of the same order, i.e., our approximants $R_n(z)$ have relative errors of the same order as the best possible approximants.

The logarithmic surface plot in Figure~\ref{fig:example1e} shows the relative error  $|R_n(z)/F(z) - 1|$ for the case $n=9$ (the same as in~Figure~\ref{fig:example1}). Note how
the poles align on a curve in the lower-left quadrant of the complex plane.
We speculate that this curve asymptotically (as $n\to\infty$)
approximates the shifted branch cut $C$ of the analytic continuation of
$F(z)$ into the lower half-plane, and that $C$ possesses the so-called S-property
(``symmetry property'', see \cite{Gonchar84, Stahl86, GR87}) with respect
to $K$. This would imply that
the equilibrium charge of the condenser $(K,C)$ has a logarithmic potential which is
(constant and) minimal on $K$ over all ``attainable'' branch cuts.
Our experiments also suggested that the curve $C$ coincides exactly with the negative imaginary semiaxis in the case of symmetric intervals $K_1 = -K_2$, and that it approaches the real positive or negative semiaxis for large or small ratios $m_1/m_2$, respectively.

A remarkable feature in Figure~\ref{fig:example1e} is that the relative error  $|R_n(z)/F(z) - 1|$ stays uniformly small ``above'' the set $K$, i.e., for complex numbers $z$ with positive imaginary part and real part in $K$. We will return to this observation in section~\ref{sec:tensorpml}.

\begin{table}\centering
\caption{Lower and upper error bounds of Theorem~\ref{T2} and actual errors $\max_{z \in K} |R_n(z)/F(z) - 1|$, $F(z) = z^{-1/2}$, for various values of $m = 2n$. The set $K$ is chosen as $K =  [-1\e3,-1]\cup [1,1\e4]$.}
\begin{tabular}{|c|c|c|c|c|c|}
  \hline
  $m$ & $m_1$ & $m_2$ & bound \eqref{T2res2} & relative error &  bound \eqref{T2res1} \\
  \hline
$  6$ & $ 3$ & $ 3$ & $1.22\e-01$ & $3.42\e-01$ & $5.52\e-01$ \\
$ 12$ & $ 5$ & $ 7$ & $8.41\e-03$ & $2.47\e-02$ & $2.85\e-02$ \\
$ 18$ & $ 8$ & $10$ & $5.49\e-04$ & $1.15\e-03$ & $1.83\e-03$ \\
$ 24$ & $11$ & $13$ & $3.57\e-05$ & $8.95\e-05$ & $1.19\e-04$ \\
$ 30$ & $13$ & $17$ & $2.32\e-06$ & $7.01\e-06$ & $7.72\e-06$ \\
$ 36$ & $16$ & $20$ & $1.51\e-07$ & $3.29\e-07$ & $5.02\e-07$ \\
$ 42$ & $19$ & $23$ & $9.79\e-09$ & $2.37\e-08$ & $3.26\e-08$ \\
$ 48$ & $21$ & $27$ & $6.36\e-10$ & $2.01\e-09$ & $2.12\e-09$ \\
$ 54$ & $24$ & $30$ & $4.13\e-11$ & $9.43\e-11$ & $1.38\e-10$ \\
$ 60$ & $27$ & $33$ & $2.69\e-12$ & $6.28\e-12$ & $8.94\e-12$ \\
  \hline
\end{tabular}\label{tab:errors}
\end{table}

\end{example}

\begin{figure}[bth]
\begin{center}
\includegraphics[width=12cm]{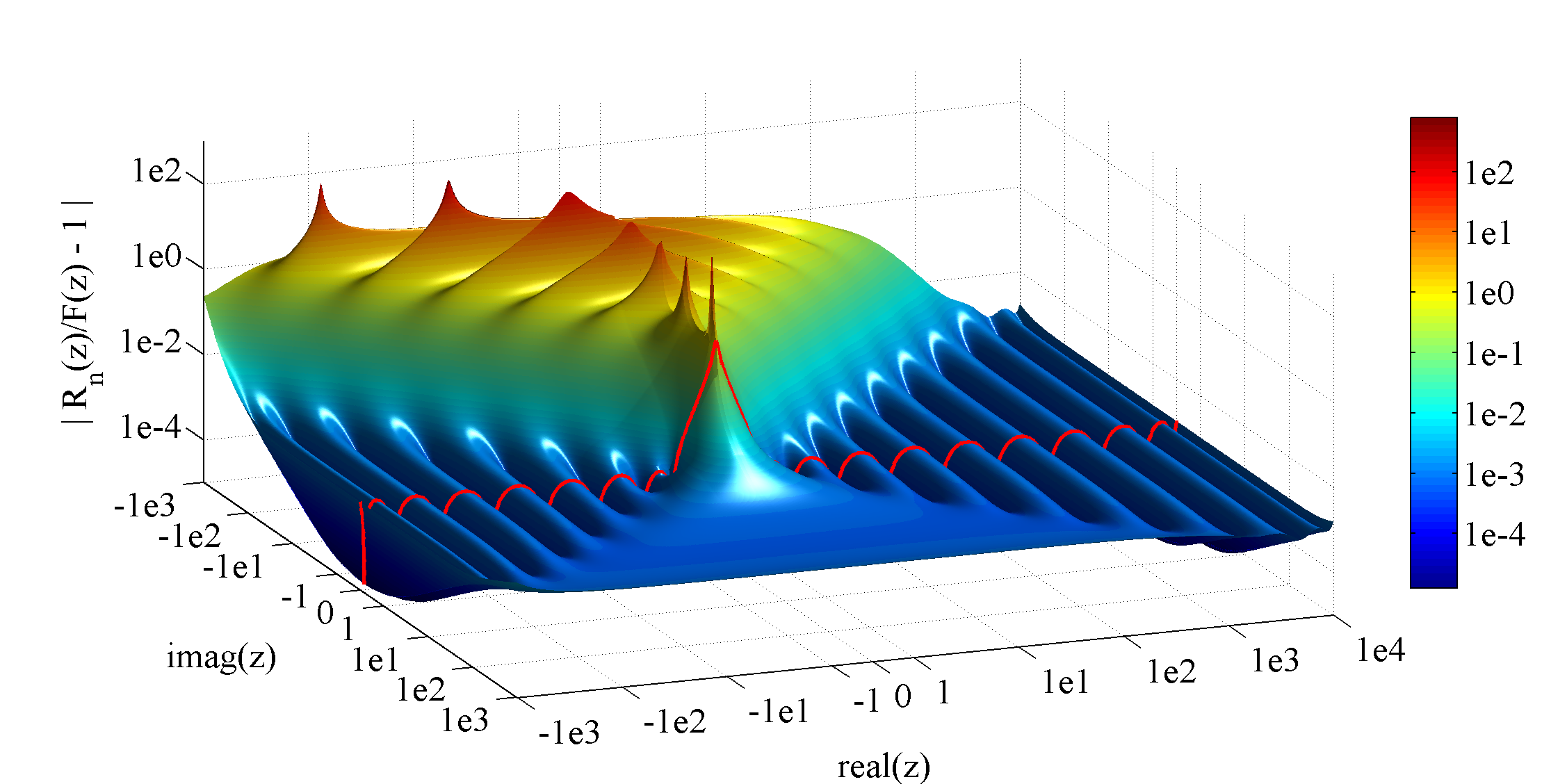}
\end{center}
\caption{Relative error $| R_n(z)/F(z) - 1|$  of a Zolotarev approximant $R_n(z)$ for $K = [-1\e3,-1]\cup [1,1\e4]$ and $n=9$ shown as a logarithmic surface plot over a region in the complex plane. The imaginary axis is plotted in reversed direction for a   better panoramic view.} \label{fig:example1e}
\end{figure}

\section{Finite difference grids from rational approximants}\label{sec:grids}

We  now explain how a rational function $R_n(z)\approx F(z)$
can be transformed into an equivalent \rev{staggered} finite difference grid for \eqref{eq:bvp}.
 Assume that we are given primal grid points and steps
\[
 0 = x_0, \ x_1, \ \ldots, \ x_n, \quad h_j = x_j - x_{j-1},
\]
and dual grid points and steps
\[
 0 = \widehat x_0, \ \widehat x_1,\  \ldots, \ \widehat x_n, \quad \widehat h_{j-1} = \widehat x_j - \widehat x_{j-1},
\]
with $j=1,\ldots,n$ in both cases. Denote by $\mathbf{u}_0,\mathbf{u}_1,\ldots, \mathbf{u}_n$ approximations
to the solution $\mathbf{u}(x)$ of \eqref{eq:bvp}
at the primal grid points $x_0,x_1,\ldots,x_n$. Let the first-order finite differences $(\mathbf{u}_{j} - \mathbf{u}_{j-1})/h_{j}$ be located at the dual points $\widehat x_j$ ($j=1,\ldots,n)$.  We assume that the following finite difference relations
\begin{eqnarray}
 \frac{1}{\widehat h_0}\left( \frac{\mathbf{u}_1 - \mathbf{u}_0}{h_1} + \mathbf{b} \right) &=&  \mathbf{A} \mathbf{u}_0, \label{eq:rel1} \\
 \frac{1}{\widehat h_j}\left(
 \frac{\mathbf{u}_{j+1} - \mathbf{u}_j}{h_{j+1}}  - \frac{\mathbf{u}_{j} - \mathbf{u}_{j-1}}{h_{j}}
        \right) &=&  \mathbf{A} \mathbf{u}_j ,
        \quad   j = 1,\ldots,n-1, \label{eq:rel2}
\end{eqnarray}
are satisfied with the convention that $\mathbf{u}_n = \vnull$. It can be verified by back-substitution
that the value  $\mathbf{u}_0$ specified by these recursive relations can be written as
\[
 \mathbf{u}_0 = R_n(\mathbf{A})\mathbf{b},
\]
where $R_n(z)$ is a rational function of type $(n-1,n)$. By construction, $-R_n(\mathbf{A})^{-1}$ is the Schur complement of the submatrix with positive indices of the system  (\ref{eq:rel1})--(\ref{eq:rel2}).
Written as a finite-length
Stieltjes continued fraction (S-fraction\footnote{We now allow for complex-valued $\widehat h_{j-1}, h_j$ ($j=1,\ldots,n$) in (\ref{eq:contf}), which is different from the classical definition of S-fractions with real positive parameters.}) this function takes the form
\begin{equation}\label{eq:contf}
    R_n(z) =
    \cfrac{1}
    {\widehat h_0 z + \cfrac{1}
                      {h_1 + \cfrac{1}
                             {\widehat h_1 z + \cdots + \cfrac{1}
                                                        {h_{n-1} + \cfrac{1}
                                                                   {\widehat h_{n-1} z + \cfrac{1}{h_n}} }  }}} \ .
\end{equation}
Recalling from above that  the exact solution of $\eqref{eq:bvp}$ satisfies $\mathbf{u}(0) = \mathbf{A}^{-1/2}\mathbf{b}$, we are apparently left with the problem of determining $R_n(z)$ such
that $R_n(\mathbf{A})\mathbf{b}\approx \mathbf{A}^{-1/2}\mathbf{b}$, optimally in some sense.
The conversion of Neumann data $-\mathbf{b}$ to Dirichlet data $\mathbf{u}(0)$
can now be  realized by solving a finite difference relation on a grid
generated from quantities $\widehat h_{j-1}$ and $h_j$ ($j=1,\ldots,n$) in \eqref{eq:contf}.

The connection between the S-fraction (\ref{eq:contf}) and the finite difference problem (\ref{eq:rel1})--(\ref{eq:rel2}) is due to Mark Krein (see, e.g., \cite{KK}).
He viewed the problem (\ref{eq:rel1})--(\ref{eq:rel2}) as a so-called \emph{Stieltjes string}, which is a string of point masses $\widehat h_{j-1}$ and weightless stiffnesses $h_j$ ($j=1,\ldots,n$), both real positive.     There is  a one-to-one correspondence between the set of  Stieltjes strings   and Stieltjes spectral functions  $R_n(z)$, which are  rational functions of type $(n-1,n)$ having $n$ non-coinciding real negative poles and real positive residues. For this case, the S-fraction parameters $\widehat h_{j-1}$ and $h_j$   ($j=1,\ldots,n$) can be computed via $2n$ steps of the Euclidean polynomial division algorithm (see, e.g., \cite{Holtz}), which can be stably executed with the help of the reorthogonalized Lanczos algorithm \cite{DK99}.  The optimal rational approximation of $F(z)$ on a positive real interval is a  Stieltjes problem \cite{IDK00},  hence the generated grid steps are real positive. The approximation problem on a single negative interval can be solved by using $R_n(-z)$, where $R_n(z)$ is the  approximation  on the   symmetrically reflected positive interval. This reflection rotates the grid steps $\widehat h_{j-1}$ and $h_j$   ($j=1,\ldots,n$) by an angle of $\pi/2$ in $\bfC$, i.e., it makes the grid steps purely imaginary. Generally, the problem of optimal approximation on
the union of a positive and a negative interval
leads to non-Stieltjes rational functions $R_n(z)$ of type $(n-1,n)$.
Assuming absence of breakdowns (which are unlikely but can not be definitely excluded), the transformation to the non-Stieltjes rational function (\ref{eq:contf}) can
still be carried out via the complex  $2n$-step  Euclidean  algorithm.
We used the bi-Lanczos extension of the Lanczos-based algorithm \cite{DK99} which, according to our experience, always produced meaningful results.

\begin{example}
We begin with reproducing a real optimal  grid from \eqref{eq:contf} generated for a real positive interval $K=[1,1\e 4]$, see  Figure~\ref{fig:example1c} (left). Similar results were reported in \emph{\cite{IDK00}}. We can consider this example as a degenerate case of the two-interval problem with $m_1=0$ and $m_2=10$. The plot shows ``alternation'' of the primal and dual grid points and monotonically growing steps. The grid looks like
an equidistant grid stretched by a rather smooth transform. It was shown in \emph{\cite{IDK00}} that for large $n$ and small interval ratios such transforms are asymptotically close to the exponential.

%
%

In Figure~\ref{fig:example1c} (right) we plot the complex finite
difference grid points obtained from the continued fraction \eqref{eq:contf} in the case when
$K = [-1\e3,-1] \cup [1,1\e4]$ and $m_1=8$ and $m_2=10$.
We notice the ``alternation'' of the primary and dual points on some ``curve'',
which is an intuitive evidence of a good quality of the grid, i.e., we can speculate that the finite difference solution approximates the exact solution with second-order accuracy on that curve. This curve can be interpreted as the complex PML transform of the real positive axis in accordance with \emph{\cite{Balslev&Combes, ChewWeedon}}.
\end{example}

In summary, we observe that the finite-difference operators on grids
obtained from \eqref{eq:contf} approximate the second-order derivative operator on curves in the complex plane. This can be viewed as a complex extension of Krein's results on the convergence  of the Stieltjes discrete string with impedance $R_n(z)$ to its continuous counterpart with impedance $F(z)$ when  $R_n\rightarrow F$ on $\bfR_+$  \cite{KK}.
Besides internal beauty, this phenomenon may have useful consequences. For example, it lets us hope
that  pseudospectral estimates and  stability results for  continuous PMLs and damped 1D differential operators  \cite{Hagstrom, Joly,DrisTref} remain valid for (\ref{eq:rel1})--(\ref{eq:rel2}) with the optimal grid.

\begin{figure}[bth]
\begin{center}
\includegraphics[width=0.48\textwidth]{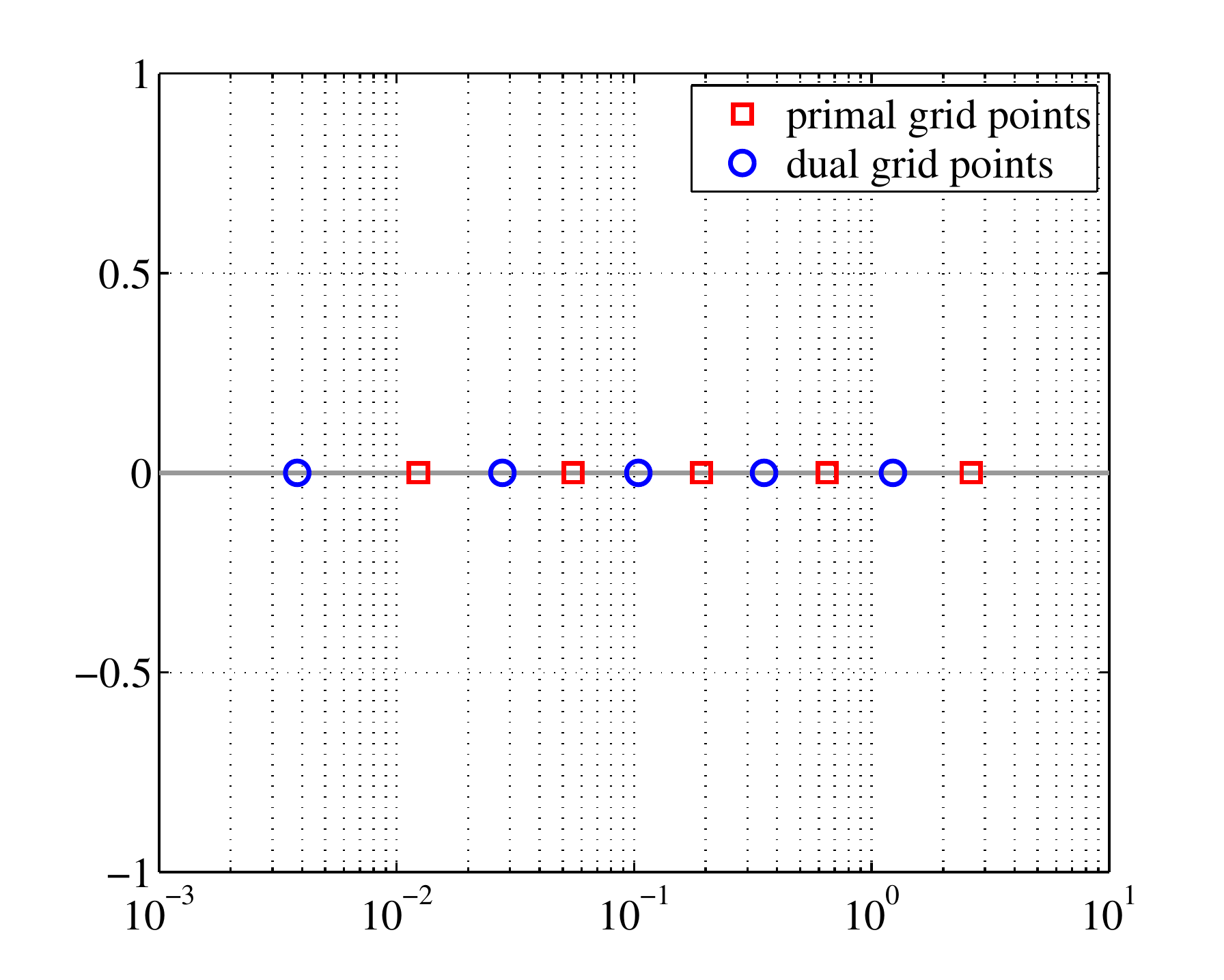}
\includegraphics[width=0.48\textwidth]{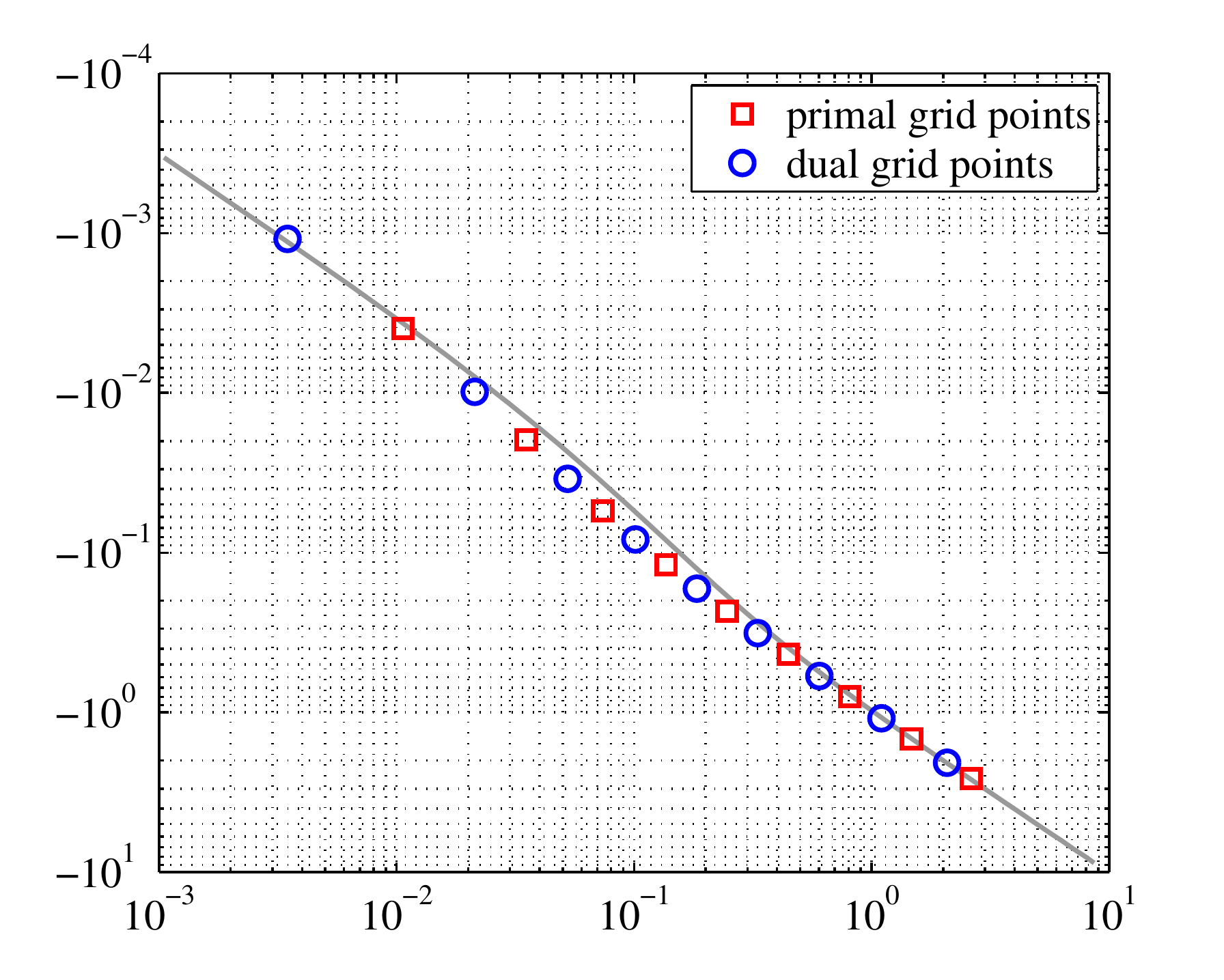}
\end{center}
\caption{Grid points generated from quantities in the continued fraction
\eqref{eq:contf}. Left: In this single-interval case the set $K$ is chosen as $K=[1,1\e4]$ with $m_2=10$ (and $m_1=0$). Right: The set $K$ is chosen as $K = [-1\e3,-1] \cup [1,1\e4]$ with  $m_1=8$ and $m_2=10$. The gray ``continuous'' curve has been obtained by connecting the grid points generated with the parameters $m_1=27$ and $m_2=33$, and we conjecture that the grid points align  on a limit curve as $m\to \infty$.} \label{fig:example1c}
\end{figure}

\section{Summary of the algorithm}\label{sec:algsumm}
In the following we provide a step-by-step description for computing the grid steps $\widehat h_{j-1}$ and $h_j$ ($j=1,\ldots,n$) in \eqref{eq:contf}.
\begin{enumerate}
\item It follows from (\ref{apprerr}) that the numbers
\[
-\left(s^{(\sqrt{-b_1},\sqrt{-a_1})}_j\right)^2, \ \  j=1,\ldots,m_1,
\quad\mbox{and}\quad
\left(s^{(\sqrt{a_2},\sqrt{b_2})}_j\right)^2, \ \  j=1,\ldots,m_2,
\]
are the interpolation nodes for $R_n(z)$ as an interpolant of $F(z)$.
Knowing interpolation nodes and function values,
we compute the coefficients of $P_{n-1}(z)$ and $Q_n(z)$ by means of solving the
corresponding system of linear algebraic equations in high-precision arithmetic.
\item
The poles of the interpolant, i.e., the roots of $Q_n(z)$, can be computed as the eigenvalues of
an associated companion matrix, see \cite[Subsection~7.4.6]{GvL}. To solve this eigenvalue problem  we
use the quasi-version%
\footnote{I.e., we formally use in the complex case the
formulas intended for the real case.}
 of the QR transformation method \cite[\S\,11.6]{NumRecF}
and then, if necessary, correct the roots
by means of a combination of Laguerre's \cite[\S\,9.5]{NumRecF} and
Newton's \cite{Kelley} method.
\item Knowing the poles of $R_n(z)$, the corresponding residues are computed.
\item  Finally, the grid steps $\widehat h_{j-1}$ and $h_j$ ($j=1,\ldots,n$) are computed  using the
recursion formulas \cite[(3.4)]{DK99}, with the underlying
analogue of an inverse eigenvalue problem for a symmetric
tridiagonal matrix (see \cite[subsection~3.1, item~3$^{\circ}$]{DK99},
\cite[theorem~7.2.1]{Parlett}) being solved by a quasi-Lanczos process \cite[Ch.~6]{CullumW85} with
quasi-reorthogonalization. Here we used the well-known connection between the Lanczos and Euclidean algorithms (see, e.g., \cite{Holtz}).
\end{enumerate}

\section{Adaptation to a second-order finite difference framework}\label{sec:impl}

\subsection{Approximation of the discrete impedance function}\label{sec:discr}

So far we have considered the function $F(z)=z^{-1/2}$, which arises when solving the
boundary-value problem \eqref{eq:bvp} for $x\in [0,+\infty)$. When this problem is seen as an infinite extension of some interior computational domain, the exponential convergence of the interpolant $R_n(z)$
is consistent with a high-order (or even  spectral) discretization of the operator acting in this computational domain.

 However, it is also possible
to compute the NtD map of a discretized version of \eqref{eq:bvp} on a uniform infinite grid via rational
approximation of a slightly modified function $F_h(z)$ to be determined below. This function will lead to a three-term finite difference scheme which is appropriate for being combined with a standard second-order finite difference discretization in the interior computational domain, because it allows for the  elimination of spurious reflections from the PML boundary due to the error of the interior discretization.

Given a fixed step size $h>0$, let us consider the problem (\ref{eq:rel1})--(\ref{eq:rel2}) on the infinite equidistant grid  with $\widehat h_0=0.5 h$ and $\widehat h_j=h_j = h$ for $j=1,\ldots,\infty$.
We will determine a function $F_h(z)$ such that
\[\mathbf{u}_0 = F_h(\mathbf{A})\mathbf{b} \]
via a well-known  approach widely used in the representation of irrational numbers via continued fractions (see, e.g., \cite[section~9]{EL}).   This approach was already applied in \cite{ThGu}  to the infinite lattice problem: the
 infinite-length S-fraction representation of $F_h$
analogous to \eqref{eq:contf} is
\[
    R_h(z) =
    \cfrac{1}
    {0.5  h z + \cfrac{1}
                      {h + \cfrac{1}
                             {h z + \rev{\cfrac{1}
                             {h + \ddots}}   }}}
\]
(for a proof of convergence we refer to \cite{Stieltjes1894} or
\cite[theorem~4.58]{JonesThron}).
The remainder continued fraction
\[
S(z)=
\cfrac{1}{h +
 \cfrac{1}{hz +
  \cfrac{1}{h+
   \cfrac{1}{hz + \ddots   }}}}
\]
evidently satisfies the equation
\[
S(z)=
\cfrac{1}{h +
 \cfrac{1}{hz+S(z)}}\, ,
\]
or equivalently $S(z)^2+hzS(z)-z=0$. Since $0.5hz+S(z)=R_n(z)^{-1}=F_h(z)^{-1}$, we
have arrived at the quadratic equation
\[F_h(z)^2 = \frac{1}{z + (0.5 hz)^2}.\]
We choose the root which converges to the exact impedance $F(z)$ as $h\rightarrow 0$, i.e.,
\begin{equation}\label{eq:discr}
    F_h(z) = \frac{1}{\sqrt{z + (0.5 hz)^2}}.
\end{equation}
This function, which we will refer to as the \emph{discrete impedance function}, approximates with second-order accuracy the exact impedance at the boundary, so being centered, the resulting finite difference scheme is of  second order globally.

Analogously to what we had achieved with \eqref{eq:invsqrtmv} for continuous $x$, the relation \eqref{eq:discr} allows us
to convert the Neumann data $-\mathbf{b}$ at $x=0$ into the Dirichlet data $\mathbf{u}_0$ without
actually solving the infinite lattice problem.

{For a given $h>0$ let us define $\s=\frac{h^2}{4}$.   
The invertible linear fractional change of variables
\be \label{zw}
w=\frac{z}{\s z+1}
\ee
translates the union of a negative and a positive segment $K=[a_1,b_1]\cup [a_2,b_2]$ again
into the union of a negative and a positive segment. Let us assume\footnote{As discussed earlier, the parameter $a_1$ should be set to a lower bound  of $\mathbf{A}$'s spectral interval, in which case the condition $-\s^{-1}<a_1$ corresponds to the Nyquist sampling criterion of two grid points per wave length. This assumption should be met by any reasonable  discretization scheme.} that $-\s^{-1}<a_1$. 
Let $P_{n-1}/Q_n$  denote the rational approximant of theorem~\ref{T2}
for the image of $K$ under transformation~(\ref{zw}). Then
\beas
\left| \sqrt{w}\cdot\frac{P_{n-1}(w)}{Q_n(w)}-1 \right|
= \left| \sqrt{\frac{z}{\s z+1}}\cdot
\frac{P_{n-1}(\frac{z}{\s z+1})}{Q_n(\frac{z}{\s z+1})}-1 \right| \\
= \left| \sqrt{\frac{z}{\s z+1}}\cdot
\frac{P_{n-1}(\frac{z}{\s z+1})(\s z+1)^{n}}
{Q_n(\frac{z}{\s z+1})(\s z+1)^{n}}-1 \right|
= \left| F_h(z)\cdot
\frac{P_{n-1}(\frac{z}{\s z+1})(\s z+1)^{n-1}}
{Q_n(\frac{z}{\s z+1})(\s z+1)^{n}}-1 \right|
\eeas
is small on $K$, the numerator and the denominator
\[
P_{n-1}\left(\frac{z}{\s z+1}\right)(\s z+1)^{n-1}, \qquad
Q_n\left(\frac{z}{\s z+1}\right)(\s z+1)^{n}
\]
being polynomials of degrees $\le n-1$ and $\le n$, respectively.
\SG{We have thereby established a direct relation between the errors 
of the rational interpolants for $F(z)$
and $F_h(z)$ on transformed compact sets, respectively, with the interpolation nodes
being transformed accordingly. This allows us to conclude that we obtain 
identical convergence rates for both interpolation processes. \SG{In particular, 
Theorem~\ref{T2} holds with $F(z)$ being replaced by $F_h(z)$.}
}} 

We would like to mention that a rational approximation-based absorbing boundary condition for the infinite lattice was suggested  in \cite{ThGu} and combined with a trapezoidal finite element approach in  \cite{GT00}. However, that approach required a modification of the Helmholtz equation by a higher-order term. On the contrary, in our framework the discreteness can be incorporated simply by adjusting  the PML grids. Visually these grids look very similar to the ones shown in Figure~\ref{fig:example1c}, i.e.,  we can speculate again that they approximate the exact solution $\mathbf{u}(x)$ of \eqref{eq:bvp} with second-order accuracy on some modified $x$-curve in the complex plane.

\subsection{Matching interior and exterior discretizations via a single grid}\label{sec:patch}

Let us consider the second-order infinite equidistant finite difference problem
\begin{equation}\label{single}
 \frac{1}{h}
 \left( \frac{\mathbf{u}_{j+1} - \mathbf{u}_j}{h} -  \frac{\mathbf{u}_{j} - \mathbf{u}_{j-1}}{h} \right)
 - \mathbf{A}\mathbf{u}_j=\mathbf{q}_j, \qquad j = -\ell,\ldots,-1,0,1,\ldots,\infty\end{equation} with boundary conditions
\begin{equation}\label{gbc}
\mathbf{u}_{-\ell-1}=\vnull, \quad \lim_{j\to\infty} \mathbf{u}_{j}=\vnull,
\end{equation}
assuming $\mathbf{q}_j=\vnull$ for $j\ge 0$.
  Problem (\ref{single}) can be split equivalently into an interior finite-dimensional system
\begin{eqnarray}\label{inter}
 \frac{1}{h}
 \left( \frac{\mathbf{u}_{j+1} - \mathbf{u}_j}{h} -  \frac{\mathbf{u}_{j} - \mathbf{u}_{j-1}}{h} \right)
 - \mathbf{A}\mathbf{u}_j&=&\mathbf{q}_j, \qquad j = -\ell,\ldots,-1,\\
\frac{1}{0.5h}\left(-\mathbf{b}-\frac{\mathbf{u}_{0} - \mathbf{u}_{-1}}{h}\right)
 - \mathbf{A}\mathbf{u}_0&=&\vnull, \nonumber
\end{eqnarray}
and an exterior infinite  system
\begin{eqnarray}\label{exter}
\label{interbc}
\frac{1}{0.5h}\left(\frac{\mathbf{u}_{1} - \mathbf{u}_{0}}{h}+\mathbf{b}\right)
 - \mathbf{A}\mathbf{u}_0&=&\vnull, \\
 \frac{1}{h}
 \left( \frac{\mathbf{u}_{j+1} - \mathbf{u}_j}{h} -  \frac{\mathbf{u}_{j} - \mathbf{u}_{j-1}}{h} \right)
 - \mathbf{A}\mathbf{u}_j&
 =&{\vnull}, \qquad j = 1,\ldots,\infty,\nonumber
\end{eqnarray}
both systems being coupled via a vector variable $\mathbf{b}$.\footnote{Problem (\ref{single})--(\ref{gbc})  can be viewed as the second-order discretization of
$
    \frac{\partial^2}{\partial x^2} \mathbf{u} -\mathbf{A} \mathbf{u}=\mathbf{q}$,
     $\mathbf{u} \big|_{x=-h(\ell+1)} = \vnull, \  \mathbf{u} \big|_{x=+\infty} = \vnull$ for some regular enough $\mathbf{q}$ supported on $[-h(\ell+1),0]$.
 As the infinite exterior problem (\ref{exter}) approximates with  second-order accuracy the same equation on $[0,+\infty)$ with conditions  $\mathbf{u} \big|_{x=0} = -\mathbf{b}$ and $\mathbf{u} \big|_{x=+\infty} = \vnull$, the relation
 (\ref{inter}) approximates with second order the same equation restricted to $[-h(\ell+1),0]$ with conditions
$\mathbf{u} \big|_{x=-h(\ell+1)} = \vnull$ and $\mathbf{u} \big|_{x=0} = -\mathbf{b}$.}

Problem (\ref{exter})  (with the
condition at infinity) was already considered in section~\ref{sec:discr}, and can be exactly eliminated using the discrete impedance function  (\ref{eq:discr}),
\begin{eqnarray*}
 \frac{1}{h}
 \left( \frac{\mathbf{u}_{j+1} - \mathbf{u}_j}{h} -  \frac{\mathbf{u}_{j} - \mathbf{u}_{j-1}}{h} \right)
 - \mathbf{A}\mathbf{u}_j&=&\mathbf{q}_j, \qquad j = -\ell,\ldots,-1,\\
\frac{1}{0.5h}\left(-F_h(\mathbf{A})^{-1}\mathbf{u}_{0} -\frac{\mathbf{u}_{0} - \mathbf{u}_{-1}}{h}\right)
 - \mathbf{A}\mathbf{u}_0&=&\vnull, \qquad \mathbf{u}_{-\ell-1}=\vnull.
\end{eqnarray*}
This formally corresponds to a Schur complement.
Upon substitution $R_n(\mathbf{A})\approx F_h(\mathbf{A})$ we arrive at the approximate problem
\begin{eqnarray*}
 \frac{1}{h}
 \left( \frac{\mathbf{u}^n_{j+1} - \mathbf{u}^n_j}{h} -  \frac{\mathbf{u}^n_{j} - \mathbf{u}^n_{j-1}}{h} \right)
 - \mathbf{A}\mathbf{u}^n_j&=&\mathbf{q}_j, \qquad j = -\ell,\ldots,-1,\\
\frac{1}{0.5h}\left(-R_n(\mathbf{A})^{-1}\mathbf{u}^n_{0} -\frac{\mathbf{u}^n_{0} - \mathbf{u}^n_{-1}}{h}\right)
 - \mathbf{A}\mathbf{u}^n_0&=&\vnull, \qquad \mathbf{u}^n_{-\ell-1}=\vnull.
\end{eqnarray*}
Hence \[\|\mathbf{u}_j^n-\mathbf{u}_j \|=O(\|R_n(\mathbf{A})- F_h(\mathbf{A})\|), \]
 since all the involved linear systems are well posed
uniformly in $n$.

 Performing similar manipulations with the approximate problem in reverse order, we
obtain the equivalent system (\ref{intern})--(\ref{extern})
\begin{eqnarray}\label{intern}
 \frac{1}{h}
 \left( \frac{\mathbf{u}^n_{j+1} - \mathbf{u}^n_j}{h} -  \frac{\mathbf{u}^n_{j} - \mathbf{u}^n_{j-1}}{h} \right)
 - \mathbf{A}\mathbf{u}^n_j&=&\mathbf{q}_j, \qquad j = -\ell,\ldots,-1,\\
\frac{1}{0.5h}\left(-\mathbf{b}-\frac{\mathbf{u}^n_{0} - \mathbf{u}^n_{-1}}{h}\right)
 - \mathbf{A}\mathbf{u}^n_0&=&\vnull, \nonumber
\end{eqnarray}

\begin{eqnarray}
 \frac{1}{\widehat h_0}\left( \frac{\mathbf{u}^n_1 - \mathbf{u}^n_0}{h_1} + \mathbf{b} \right) &-&  \mathbf{A} \mathbf{u}^n_0=\vnull, \label{extern} \\
 \frac{1}{\widehat h_j}\left(
 \frac{\mathbf{u}^n_{j+1} - \mathbf{u}^n_j}{h_{j+1}}  - \frac{\mathbf{u}^n_{j} - \mathbf{u}^n_{j-1}}{h_{j}}
        \right) &-&  \mathbf{A} \mathbf{u}^n_j=\vnull,
        \qquad   j = 1,\ldots,n-1, \nonumber
\end{eqnarray}
by introducing $\mathbf{b}$ and fictitious variables $\mathbf{u}^n_j$ with positive subindices  which, unlike their negative counterparts, do not approximate corresponding components of $\mathbf{u}(x)$.
Finally, eliminating $\mathbf{b}$ we can merge the systems (\ref{intern})--(\ref{extern}) into a single recursion
 \begin{eqnarray}\label{total}\nonumber
 \frac{1}{\widehat h_j}\left(
 \frac{\mathbf{u}^n_{j+1} - \mathbf{u}^n_j}{h_{j+1}}  - \frac{\mathbf{u}^n_{j} - \mathbf{u}^n_{j-1}}{h_{j}}
        \right) -\mathbf{A} \mathbf{u}^n_j ={\mathbf{q}_j},
        \qquad   j = -\ell,\ldots,n-1, \\ \mathbf{u}^n_{-\ell}=\vnull, \ \mathbf{u}^n_{n}=\vnull,\nonumber
\end{eqnarray}
with the convention that $\widehat h_j := h$ for $j<0$, $h_j :=h$ for $j\le 0$,
$\widehat h_j := \widehat h_j$ for $j>0$, $h_j :=h_j$ for $j> 0$, and
$\widehat h_0 := \widehat h_0 + h/2$ (see also Figure~\ref{fig:grid}).
This finite difference scheme is easy to implement by simply modifying the $n$ trailing primal and dual grid steps in a given finite difference scheme with step size $h$.
We reiterate that this scheme converges exponentially with error $O(\|R_n(\mathbf{A})- F_h(\mathbf{A})\|)$ to the solution of (\ref{single})--(\ref{gbc}) in the interior domain, i.e., for the nonpositive subindices.

The above derivation can easily be extended to variable operators $\mathbf{A} = \mathbf{A}_j$ in the interior domain and tensor-product PML discretizations.
 This will be illustrated by a numerical example in section~\ref{sec:tensorpml}.

\begin{figure}[h]
\begin{center}
\includegraphics[width=0.75\textwidth]{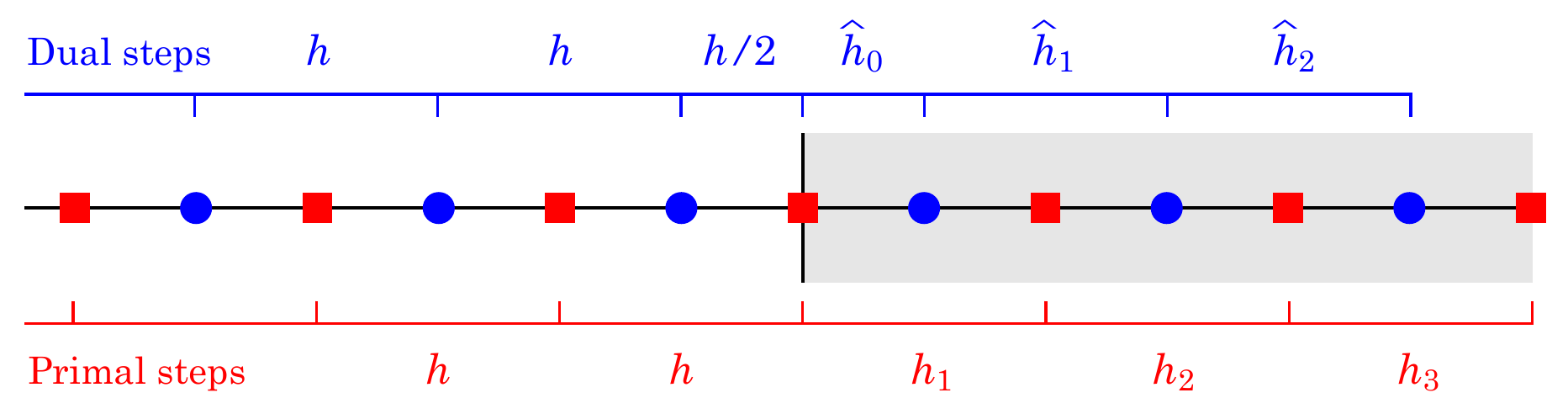}
\end{center}
\caption{Schematic view of a finite difference grid appended with an absorbing boundary layer  generated from
quantities in the continued fraction \eqref{eq:contf}. The example shown here is for the case $n=3$.
The gray-shaded region corresponds to the appended absorbing boundary layer, and the grid steps  $\widehat h_0,\widehat h_1,\ldots, \widehat h_{n-1}$ and $h_1,h_2,\ldots,h_n$ in this layer are generally complex.} \label{fig:grid}
\end{figure}

\section{Numerical experiments}\label{sec:numex}
\subsection{Waveguide example}\label{sec:waveguide}
To test the accuracy of our absorbing boundary layer, we consider the inhomogeneous Helmholtz equation
\[
    \Delta u(x,y) + k^2 u(x,y) = f(x,y)
\]
on a rectangular domain $\Omega = [ 0,L] \times [0,H]$ of length $L$ and height $H$. We prescribe homogeneous Dirichlet conditions at the upper and lower boundaries in $y$.
The source term is set to
\[
    f(x,y) = 10\cdot \delta(x - 511\pi/512)\cdot \delta(y - 50\pi/512),
\]
with the Dirac delta function $\delta(\cdot)$.
%
%
%
%
%

Our aim is to verify that our absorbing boundary layer models the correct physical behavior.
To this end we solve the above Helmholtz equation on two rectangular domains with fixed height $H=\pi$ and different lengths $L=\pi$ and $L=2\pi$, respectively. See also Figure~\ref{fig:example11} (left and right, respectively).
The wave number is chosen as $k=50$. The problem is discretized by central finite differences with step size $h=\pi/512$ in both coordinate directions. The eigenvalues of the resulting tridiagonal matrix $\mathbf{A}$, corresponding to the operator $-\partial^2/\partial y^2 - k^2$ on $[0,\pi]$ with homogeneous Dirichlet boundary conditions, are explicitly known and  eigenvalue inclusion intervals are
\[
    [a_1,b_1]\cup [a_2,b_2] = [-2.50\e3,-1.95\e1]\cup [7.98\e1,1.04\e5].
\]
We extend the interior finite difference grid by our absorbing boundary layer with $n=m/2$ additional grid points to the left of $x=0$ and to the right of $x=L$, with the near-optimal grid steps computed from a rational interpolant $R_n(z)$ of $F_h(z)$ as explained in section~\ref{sec:discr}. The physical domain can hence be thought of as an infinite strip parallel to the $x$-axis.
We therefore expect the solutions of both problems (with $L=\pi$ and $L=2\pi$) to coincide when they are restricted to $[0,\pi]\times [0,\pi]$. Visually, this is indeed the case, as one can see in Figure~\ref{fig:example11} (where $n=10$).
Note how the amplitude of the solution is damped very quickly inside the absorbing boundary layer.

To quantify the accuracy of our absorbing boundary layer numerically, we plot in Figure~\ref{fig:example11e} the relative uniform norm of the difference of the two numerical solutions $u_1(x,y)$ and $u_2(x,y)$ restricted to $[0,\pi]\times [0,\pi]$, i.e.,
\begin{equation}\label{eq:err}
    \mathrm{err} = \max_{\substack{0\leq x,y\leq \pi}}
    | u_1(x,y) - u_2(x,y) |\Big/ \max_{\substack{0\leq x,y\leq \pi}} | u_1(x,y)|.
\end{equation}
Indeed, this figure reveals exponential convergence with the rate $\rho$ given in Theorem~\ref{T2}. In this example, the expected rate is $\rho\approx 0.57$ and this is indicated by the slope of the dashed line in Figure~\ref{fig:example11e}.

We would like to mention that absorbing boundary layers usually require some physical separation from the support of the right-hand side (the source term) \cite{Hagstrom2}. However, thanks to the efficient   absorption of evanescent  and propagative modes even on spectral subintervals with extreme interval ratios,  we are able to place our Dirac source  extremely close to the PML boundary (only one grid point away, see the right of Figure~\ref{fig:example11}) without deteriorating convergence (see Figure~\ref{fig:example11e}).

\begin{figure}[t]
\hspace*{-7mm}\includegraphics[width=0.695\textwidth]{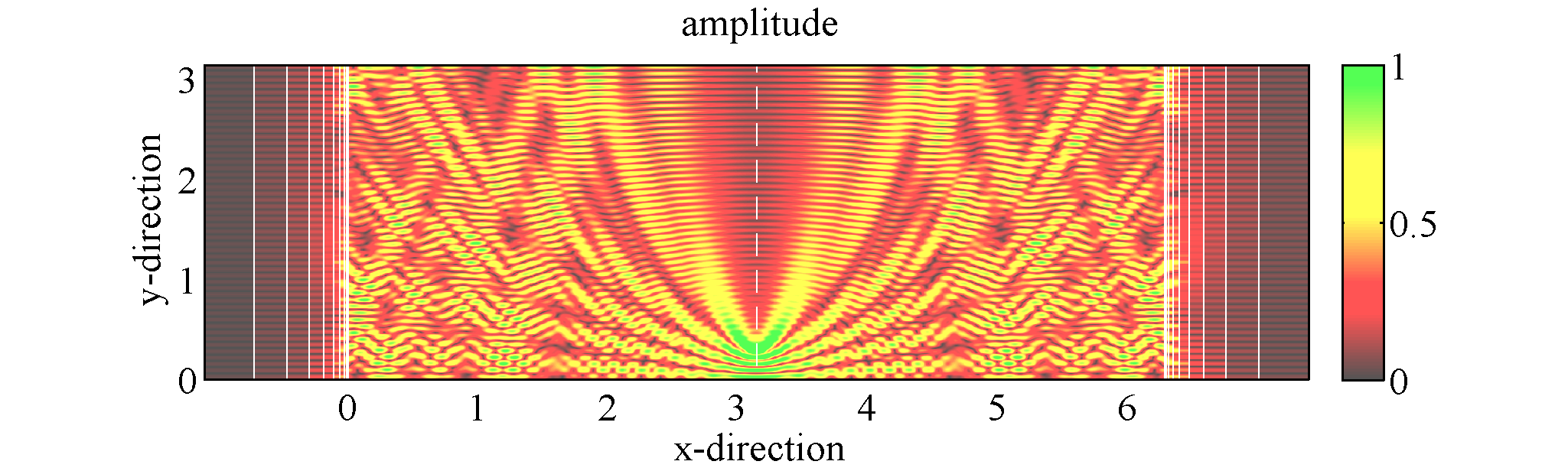}\hspace*{-8mm}
\includegraphics[width=0.462\textwidth]{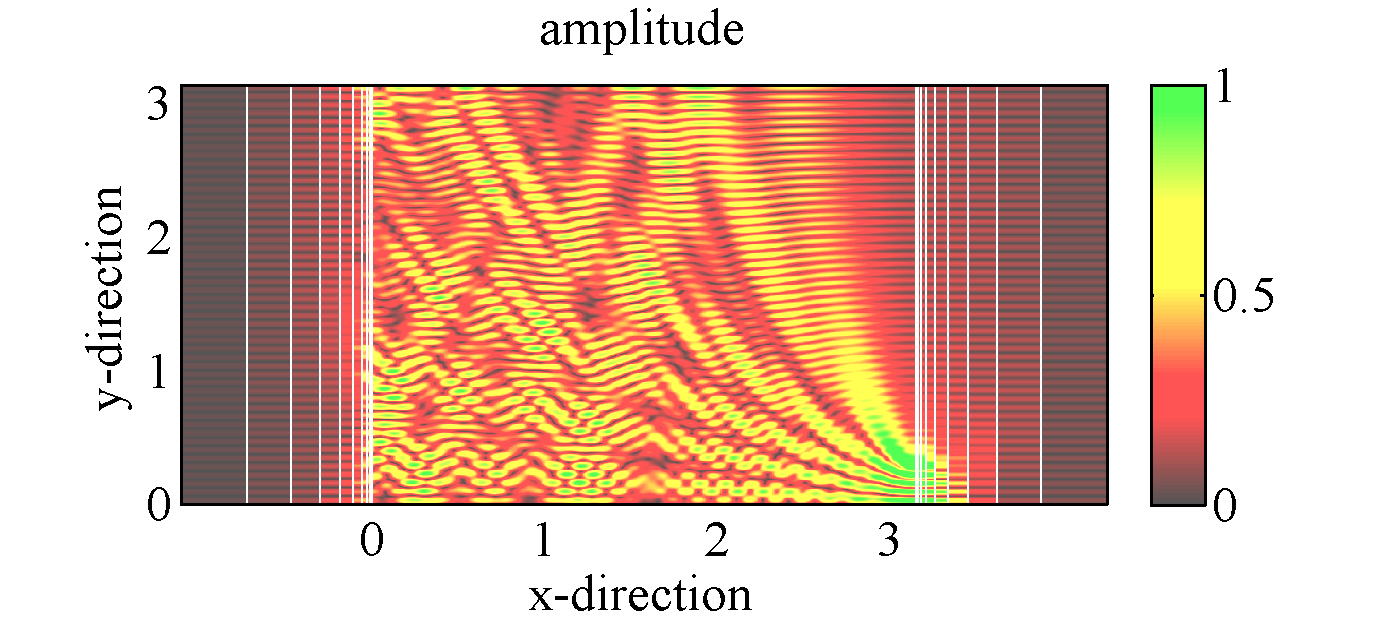}\\[3mm]
\hspace*{-7mm}\includegraphics[width=0.695\textwidth]{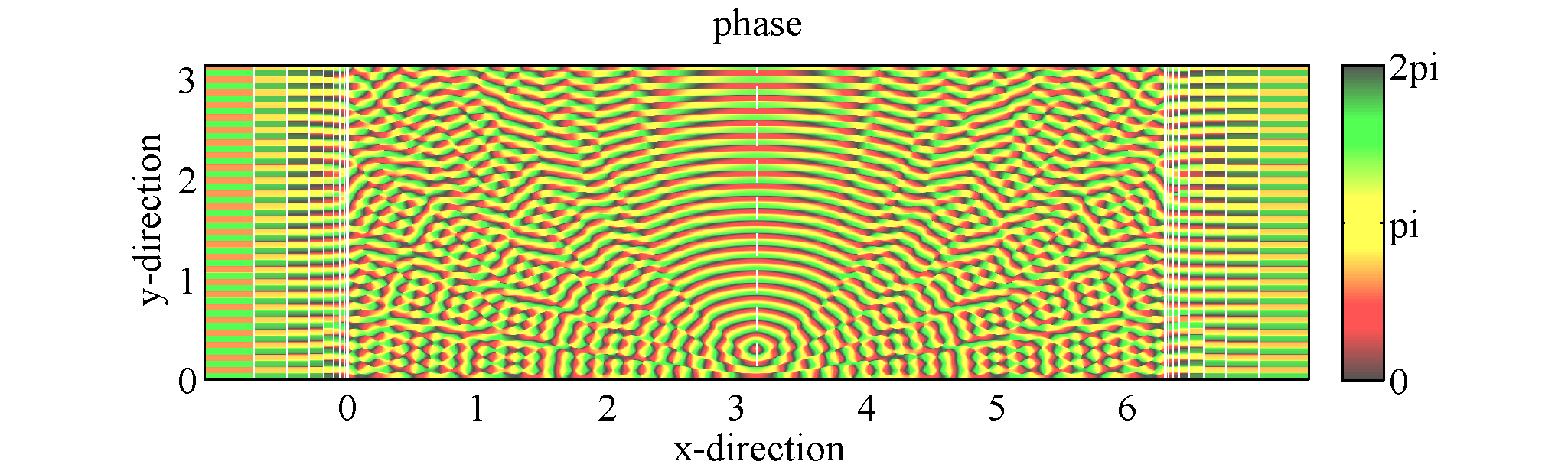}\hspace*{-8mm}
\includegraphics[width=0.462\textwidth]{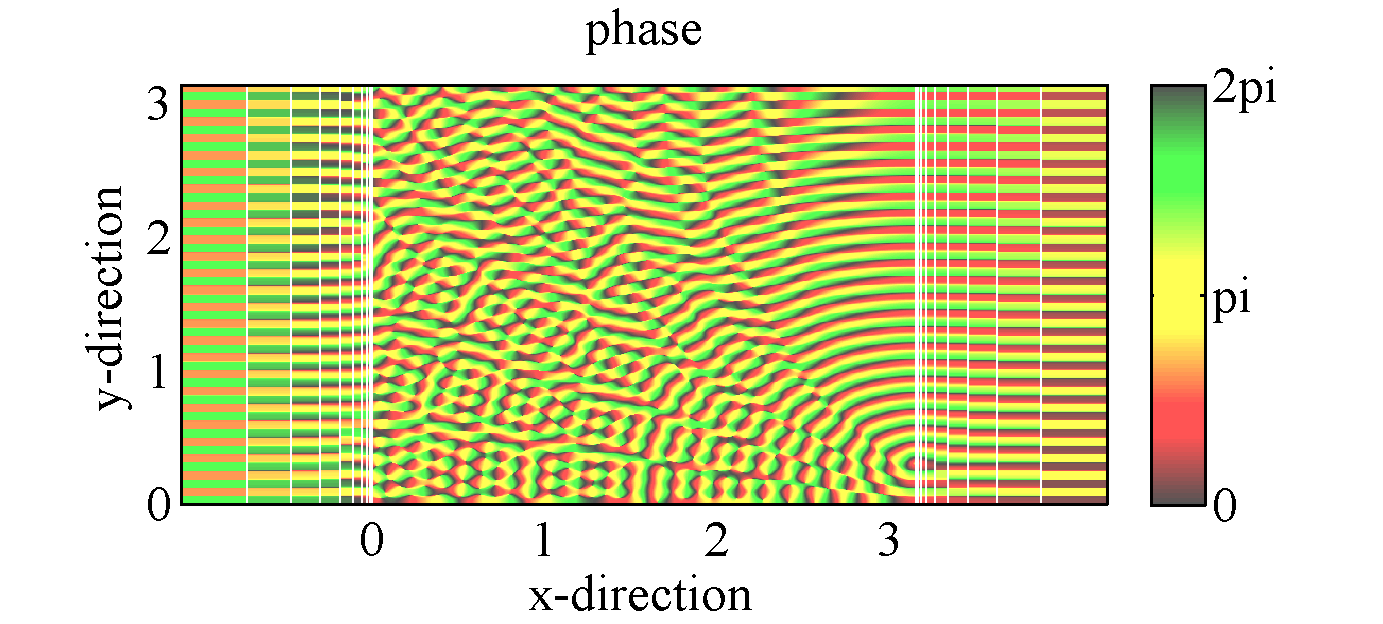}
\caption{Amplitude (top) and phase (bottom) of the solution to the waveguide problem in section~\ref{sec:waveguide} on
two rectangular domains (left/right) which differ in their length. The left domain is of length $L=2\pi$ in the $x$-direction, whereas the right domain is of length $L=\pi$. Both domains have been appended with absorbing layers at the left and right boundaries. As the absorbing boundary layers serve the purpose of extending the physical domain towards infinity, both solutions are expected to coincide on the restriction to $x\in [0,\pi]$. In these pictures we have chosen $m=20$, so there are $n=10$ points appended to the left and right boundaries. The step size in the interior domain is $h=\pi/512$ in both coordinate directions.} \label{fig:example11}
\vspace*{5mm}
\begin{center}
\includegraphics[width=0.6\textwidth]{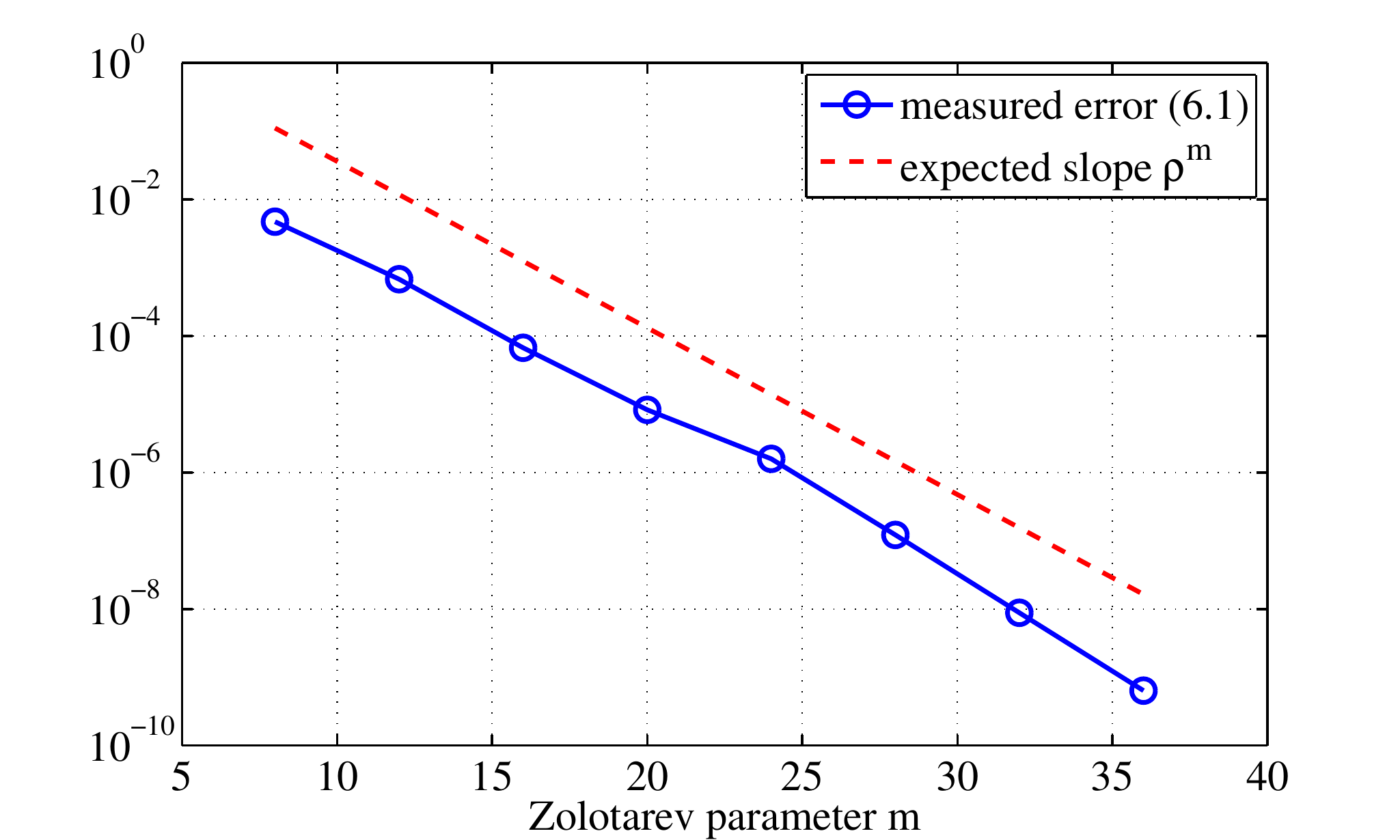}
\end{center}
\caption{Exponential convergence of the accuracy of the absorbing boundary layers for the waveguide problem in section~\ref{sec:waveguide}
with varying Zolotarev parameter $m\in\{8,12,\ldots,36\}$ (twice the number of grid points in each  absorbing boundary layer).
The expected convergence rate $\rho\approx 0.57$ by Theorem~\ref{T2} is indicated by the dashed line.} \label{fig:example11e}
\end{figure}

%
%

\subsection{PML in multiple coordinate directions}\label{sec:tensorpml}

In this experiment we demonstrate how our perfectly matched layer can be
used to mimic domains which are unbounded in several coordinate directions, \rev{and where there is a nonconstant wave speed. 
To this end consider 
\[
    c(x,y)^2\Delta u(x,y) + k^2 u(x,y) = f(x,y)
\]
on a square domain $\Omega_1 = [0,1]^2$. The wave speed $c(x,y)$ varies as indicated in Figure~\ref{fig:drawing}, with $c=1$ in the gray region (background material), $c=1/\sqrt{2}$ in the light gray layer, and $c=\sqrt{2}$ in the dark inclusion at the bottom (see also  Figure~\ref{fig:example12}).  The wave number is
 chosen as $k=120$ and the source term is set to
 \[
    f(x,y) = \delta(x - 120/400)\cdot \delta(y - 280/400).
\]
The domain $\Omega_1$ is discretized by central finite differences with step size $h=1/400$ in both coordinate directions. We aim to append absorbing boundary layers with $n\in\{7,9,11,13\}$ grid points
at each of the four edges of $\Omega_1$.

 For constructing the absorbing layers in the $y$-direction (below $y=0$ and above $y=1$) we need inclusion intervals for the negative and positive eigenvalues of $\mathbf{L}_x - k^2 \mathbf{I}$, where $\mathbf{L}_x$ is the discretization of $-c(x)^2\partial^2/\partial x^2$ on $[0,1]$ with homogeneous Dirichlet boundary and $c(x)=1$ for $y\in\{0,1\}$. (Note that $c(x,y)$ varies only tangentially along the boundaries of $\Omega_1$, so for $y\in \{0,1\}$ we can indeed write $c(x,y)=c(x)$.) Possible inclusion intervals for the eigenvalues are
 \begin{equation}\label{eq:interv}
  [a_1,b_1] \cup [a_2,b_2] = [-1.44\e4,-2.53\e2]\cup [4.94\e2,6.26\e5].
 \end{equation}
For constructing the absorbing layers in the $x$-direction (to the left of $x=0$ and to the right of $x=1$) we need  inclusion intervals \eqref{eq:interv} for the negative and positive eigenvalues of $\mathbf{L}_y - k^2 \mathbf{I}$, where $\mathbf{L}_y$ is the finite difference discretization of $-c(y)^2\partial^2/\partial y^2$ on $[0,1]$ with homogeneous Dirichlet boundary and
\[
 c(y) = \begin{cases}
1/\sqrt{2}, & 0.255 \leq y \leq 0.44, \\
1, & \text{otherwise.}
\end{cases}
\]
Possible intervals are
 \begin{equation}\label{eq:interv2}
  [a_1,b_1] \cup [a_2,b_2] = [-1.44\e4,-2.42\e2]\cup [4.82\e2,6.26\e5].
 \end{equation}

From the union of intervals in \eqref{eq:interv} and \eqref{eq:interv2} we can now calculate the grid steps of absorbing boundary layers in the $y$- and $x$-directions, and then modify the finite difference matrices to $\mathbf{\widehat L}_x$ and $\mathbf{\widehat L}_y$, respectively. As in the previous example, this is done by computing a rational interpolant $R_n(z)$ of $F_h(z)$ defined in section~\ref{sec:discr}.}

However, there is a small subtlety one has to be aware of with the  approach just described: effectively, the NtD operators are now given as $F_h(\mathbf{\widehat L}_x - k^2 \mathbf{I})$ and
$F_h(\mathbf{\widehat L}_y - k^2 \mathbf{I})$, respectively, and the involved matrices are no longer Hermitian. In Figure~\ref{fig:example12cd} (left) we show the eigenvalues of $\mathbf{\widehat L}_x$ and $\mathbf{\widehat L}_y$, respectively, and observe that these eigenvalues have ``lifted off'' the real axis into the upper half of the complex plane, in agreement with the analysis of \cite{DrisTref}  for continuous one-dimensional damped operators. From  Figure~\ref{fig:example1e} we find at least visually that the Zolotarev approximant is of a good quality in this region as well, and the accuracy of the resulting absorbing boundary layers  should still be satisfactory.

To quantify the accuracy numerically, we solve the same Helmholtz problem on a smaller domain $\Omega_2 = [0.1,0.9]^2$, again appended with absorbing boundary layers of $n$ grid points at each of the
four edges of $\Omega_2$. As the source term $f(x,y)$ is supported inside $\Omega_2$, we expect coinciding solutions $u_1(x,y)$ and $u_2(x,y)$ on their restrictions to~$\Omega_2$.
In Figure~\ref{fig:example12cd} (right) we have plotted the relative uniform norm of the difference
of both solutions, i.e.,
\begin{equation}\label{eq:err2}
    \mathrm{err} = \max_{\substack{0.1 \leq x,y\leq 0.9}}
    | u_1(x,y) - u_2(x,y) |\Big/ \max_{\substack{0.1 \leq x,y\leq 0.9}} | u_1(x,y)|.
\end{equation}
Again we observe exponential convergence, and the reduction of the measured error is in good agreement with (even slightly better than) the rate $\rho=0.59$ expected from Theorem~\ref{T2}.

\begin{figure}[bth]
\begin{center}
\begin{minipage}[h]{\textwidth}
\hspace*{-1mm}\includegraphics[width=0.545\textwidth]{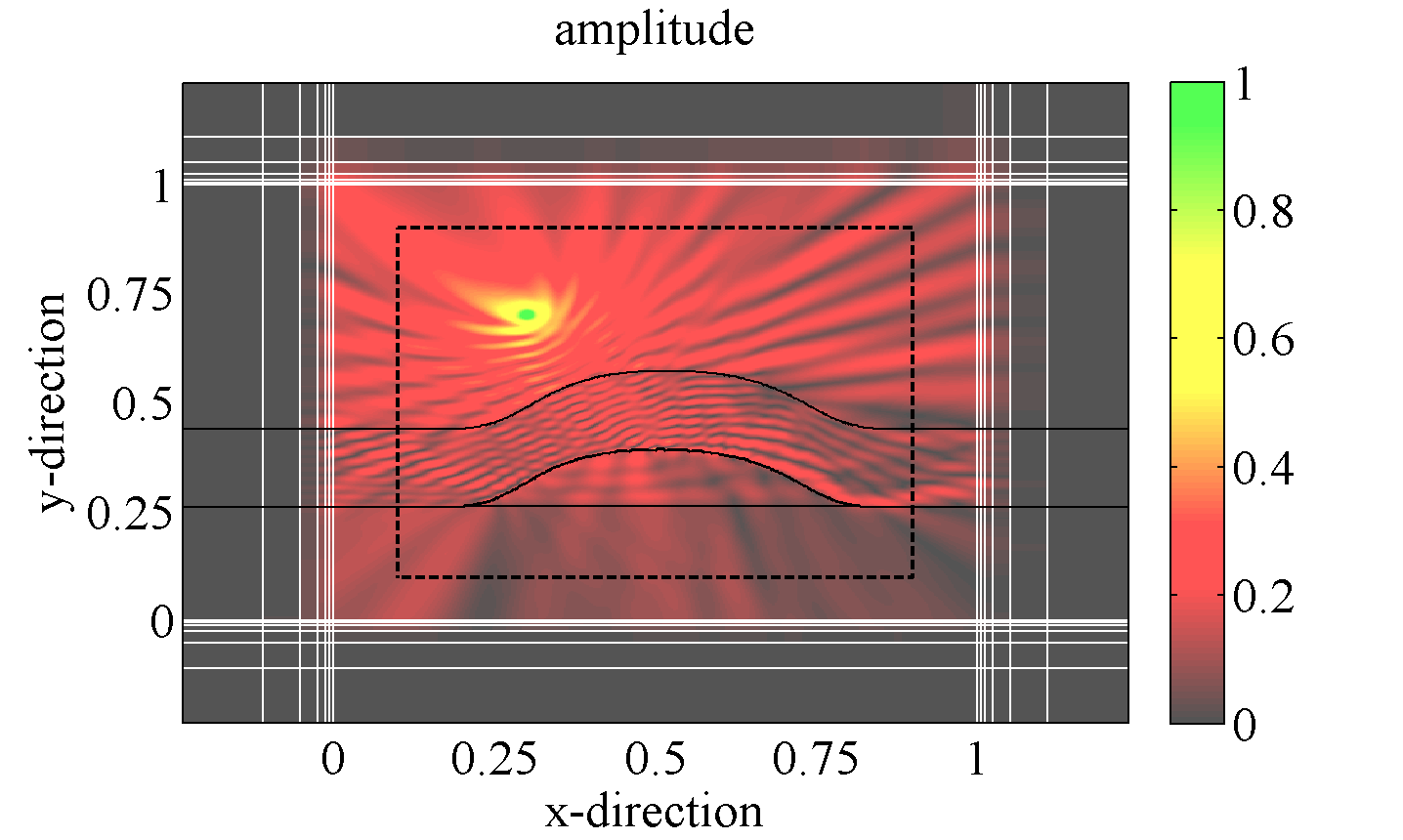}
\hspace*{-4mm}\includegraphics[width=0.545\textwidth]{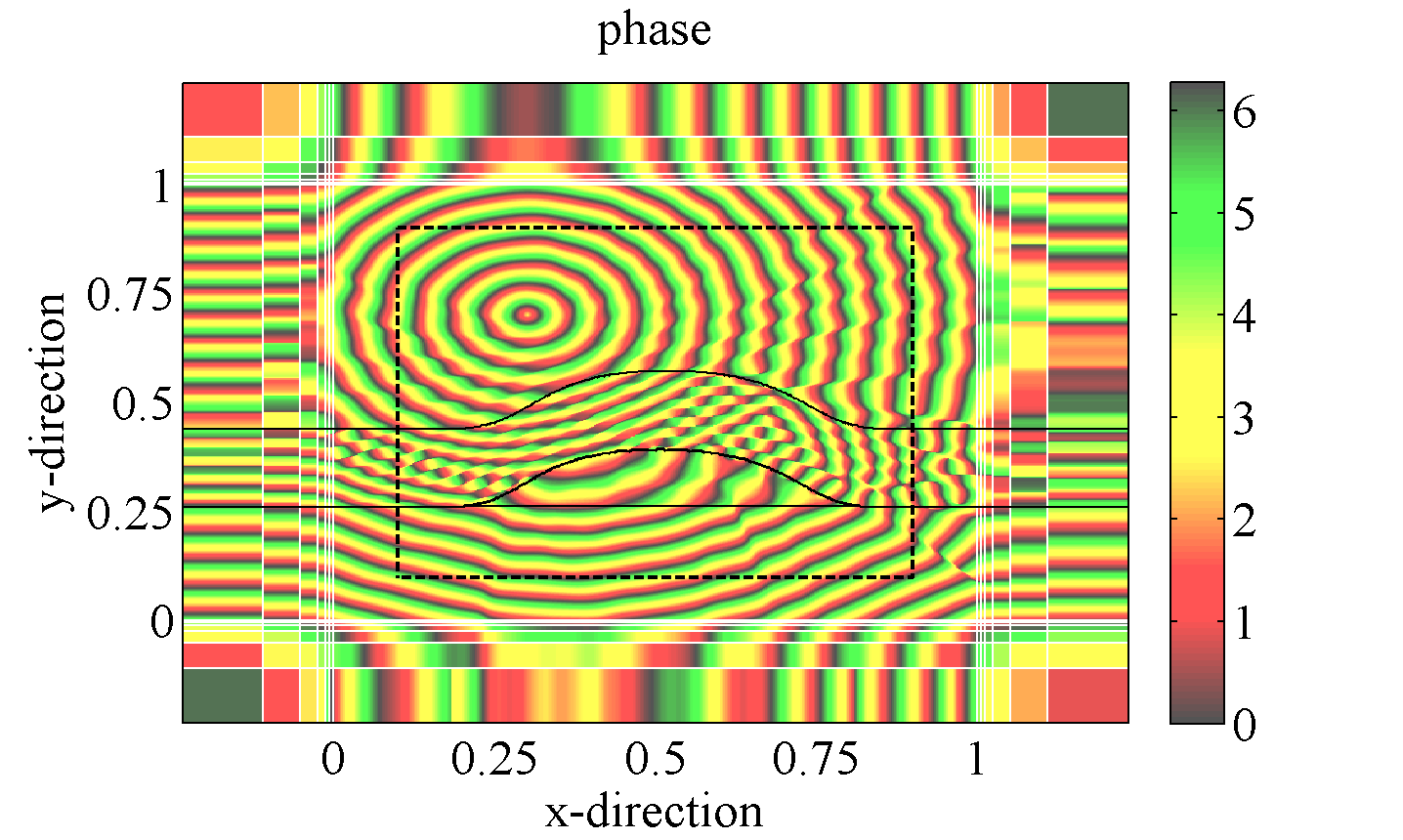}
 \end{minipage}
\end{center}
\caption{Amplitude (left) and phase (right) of the solution to the Helmholtz problem in section~\ref{sec:tensorpml} on a square domain $\Omega_1 = [0,1]^2$ appended with absorbing boundary layers at all boundary edges. In these pictures we have chosen the Zolotarev parameter $m=14$, so there are $n=7$ grid points appended to the boundaries. The step size in the interior domain is $h=1/400$ in both coordinate directions. The dashed square in the interior indicates the smaller domain $\Omega_2 = [0.1,0.9]^2$, on which we solve the same Helmholtz problem for assessing the numerical accuracy of our absorbing boundary layers.} \label{fig:example12}
\vspace*{5mm}
\begin{center}
\begin{minipage}[h]{\textwidth}
\begin{minipage}[h]{.5\textwidth}
\vspace*{-39.5mm}\includegraphics[width=1\textwidth]{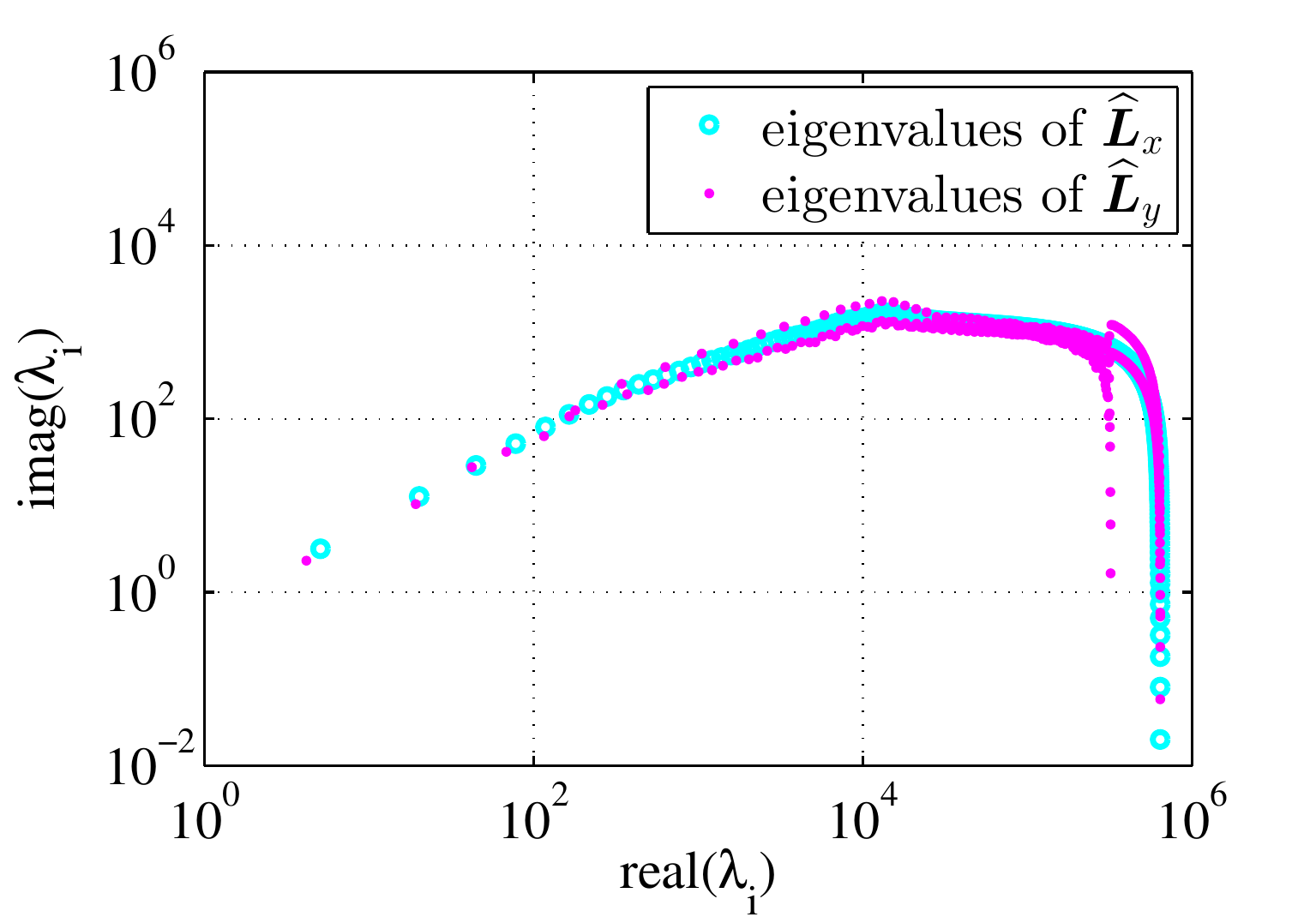}
\end{minipage}
\vspace*{0mm}\includegraphics[width=0.475\textwidth]{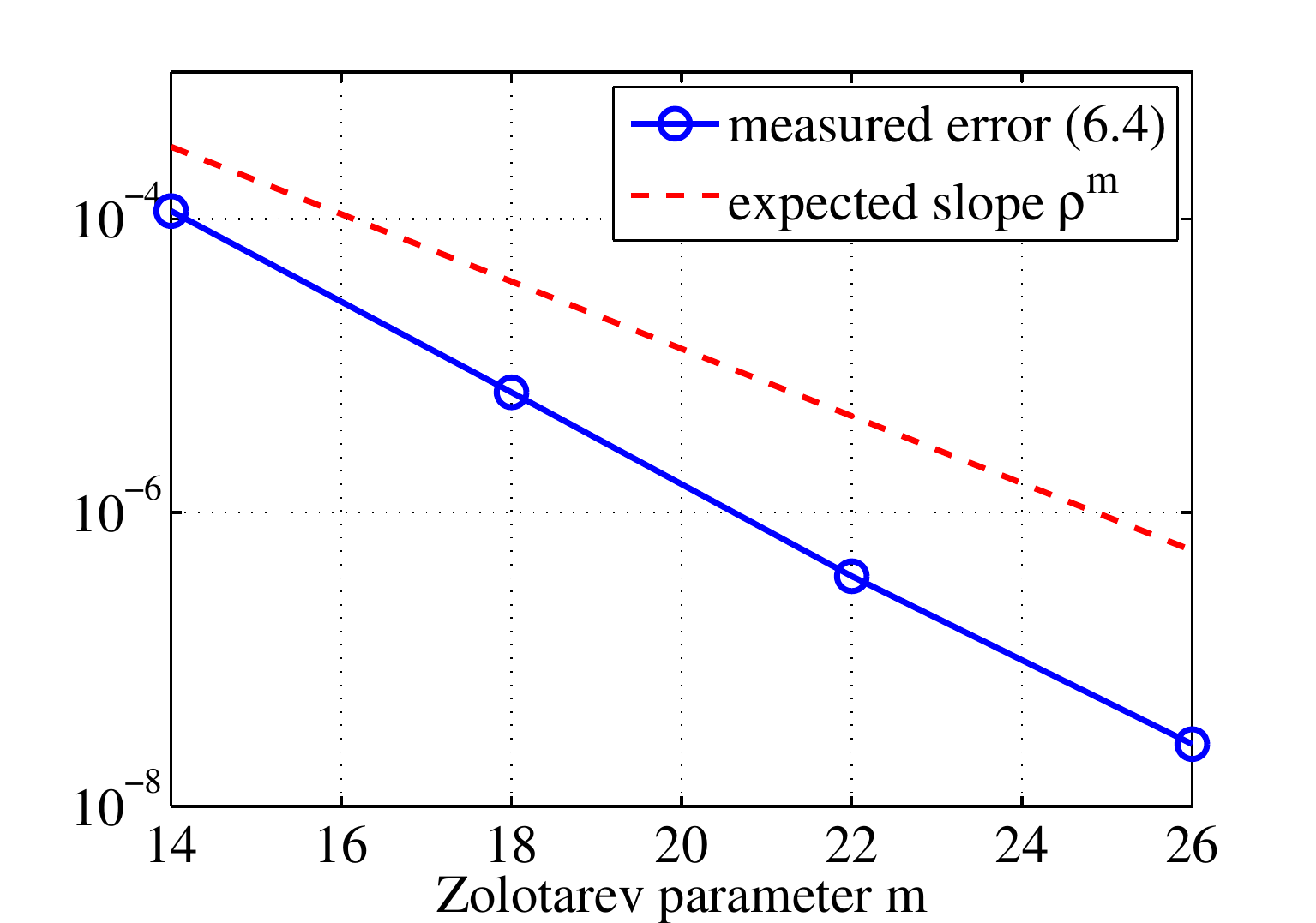}
\end{minipage}
\end{center}
\caption{Left: Eigenvalues of the matrices $\mathbf{\widehat L}_x$ and $\mathbf{\widehat L}_y$ associated with the Helmholtz problem in section~\ref{sec:tensorpml},  appended with $n=7$ grid points at the boundaries. Right: Exponential convergence of the accuracy of the absorbing boundary layers
with varying Zolotarev parameter $m\in\{14,18,\ldots,26\}$ (twice the number of grid points in each  absorbing boundary layer).
The expected convergence rate $\rho\approx 0.59$ by Theorem~\ref{T2} is indicated by the dashed line.} \label{fig:example12cd}
\end{figure}

\rev{\section{Summary, generalizations, and open problems}

We have presented a new approach for the construction of  discrete absorbing boundary layers for indefinite Helmholtz problems via complex coordinate transforms. This approach is based on the use of near-optimal relative
rational interpolants of the inverse square root (or a modification thereof) on a negative and a positive real interval. Bounds for the approximation error have been derived, and the exponential convergence of the approximants has been established theoretically and demonstrated at numerical examples. Although our focus in this paper was on absorbing boundary conditions for indefinite Helmholtz problems, it was recently understood that these conditions also constitute good approximations to Schur complements of certain PDE discretization matrices, and they became a crucial component of modern Helmholtz preconditioners, such as  Schwarz domain decomposition \cite{Gander,BAG12} and the sweeping preconditioner in \cite{Engquist}. Preliminary results have shown successful application to a
multilevel domain decomposition preconditioner, and a related Schlumberger  patent application   is pending.

\subsection{Time-domain problem} 
Classical (explicit)  finite-difference time-domain formulations  lead to PMLs that can be represented  via  grid steps $\gamma_j$ and $\widehat \gamma_j$  which are dependent on the wave number $k$ as
$ \gamma_j=\alpha_j+\frac{\beta_j}{i k}$, $\widehat \gamma_j=\widehat\alpha_j+\frac{\widehat\beta_j}{i k}$, where $\alpha_j,\beta_j,\widehat \alpha_j,\widehat\beta_j$ are  real positive parameters  \cite{Hagstrom, Ber94,Joly,ChewWeedon}. 
Our experiments suggest that the steps of our optimal PML grids always have positive real parts, and negative or zero imaginary parts; see, e.g., the grid in Figure~\ref{fig:example1c}. 
So formally, the steps of the frequency-dependent PML can be obtained as
$ \gamma_j=\Re h_j+i\Im h_j\frac{k_0}{k}$, $\widehat \gamma_j=\Re \widehat h_j+i\Im \widehat h_j\frac{k_0}{k}$,
where the steps $h_j$  and $\widehat h_j$ are obtained for a fixed wave number $k_0$. If the rational approximant  for $k=k_0$ uses  symmetric intervals of approximation, 
the corresponding grid lies on a semiaxis rotated by $-\frac{\pi}{4}$ with respect to $\bfR_+$ and  the  introduction of $k\ne k_0$ is equivalent to the rotation of the grid and the spectral measure respectively on  $\bfC\setminus \bfR_+$ and  
$\bfC\setminus \bfR_-$. Therefore such grids retain the exponential convergence for $z\in \bfR_-$. However, it is not clear if
the exponential convergence holds for nonsymmetric intervals, neither is known if this convergence holds for the approximation of the discrete impedance function from section~\ref{sec:discr}.

Alternatively, the time domain solution can be represented via  stability-correcting functions of the discretized operator with the PML obtained for a fixed wave number; see~\cite{DrRemis}. It can then be  efficiently computed  in the time domain  via Krylov subspace projection.

\subsection{Maxwell and elasticity  systems}
Important hyperbolic systems, such as {\it isotropic} Maxwell's and linear elasticity systems are usually approximated via staggered finite-difference (finite volume) schemes \cite{Yee,Virieux}. Staggered schemes for multidimensional  problems  can be constructed via tensor products of one-dimensional staggered schemes. Thus  the one-dimensional staggered grids developed in this paper can be automatically implemented in such framework, similar to what was done in   \cite{davydycheva_etal2002,Lisitsa}  for PMLs based on the single interval rational approximants. Work \cite{Lisitsa} also provides    error estimates for the propagative modes in   isotropic elasticity systems, showing that for  hyperbolic systems one may need rational  approximants on slightly larger spectral intervals compared to the scalar wave equation.  

\subsection{Adaptive grids}
The uniform approximation approach  requires bounds for the smallest/largest negative and positive eigenvalues, which can be rather loose due to the weak dependence of the convergence rate of the Zolotarev approximants on the interval ratios. The external bounds of the intervals can thus be
estimated roughly.
 Still, the numerical estimation of the internal bounds can be rather difficult, and accidentally at least one eigenvalue  may be very close to the origin, in which case even an optimal approximant may require significant order for a satisfactory accuracy.
To circumvent this problem, it would be interesting to derive a parameter-free near-optimal rational  approximant of $\mathbf{A}^{-1/2} \mathbf{b}$, which takes into account the discrete nature of the spectrum of $\mathbf{A}$ and the spectral weights of the vector $\mathbf{b}$. Promising  steps have been made by using adaptive rational Krylov algorithms \cite{GK13,Gut13,BG14} for this purpose.

\subsection{Variable coefficients}
As explained in this paper, variable PDE coefficients in the tangential direction can be straightforwardly incorporated into the PML by modifying $\mathbf A$. Moreover,  according to  preliminary findings,  our PML approach may be  generalized  for coefficients varying in the normal direction.
Let us  replace the first equation of \eqref{eq:bvp} by 
    $$\frac{\partial^2}{\partial x^2} \mathbf{u}(x)+c(x)\mathbf{u}(x) = \mathbf{A} \mathbf{u}(x),$$
with compactly supported  coefficient $c(x)\in L_\infty$. Then grid steps $h_j,\hat h_j$, $j=1,\ldots,n$ can be obtained   via rational approximation of  the ``fine grid'' finite-difference NtD map $F^h_c=u_0/\frac{u_{0} - u_{-1}}{h}$, involving the finite-difference system $$\frac{1}{h}
 \left( \frac{u_{j+1} - u_j}{h} -  \frac{u_{j} - u_{j-1}}{h} \right)
 +c(jh) u_j=z u_j, \qquad j = 0,1,\ldots,\infty$$ with boundary conditions
\eqref{gbc}. For constant $c$ such an approach  yields $F^h_c$ being the same as  $F^h$  defined in section~\ref{sec:discr}.
Our experiments with  discrete PMLs for variable coefficients exhibited exponential convergence albeit at a slower rate than for the constant coefficient problem.

\subsection{Connection to inverse problems}
Constructing PMLs can be viewed as finding  equivalent media matching the NtD maps, and this is reminiscent to what is done in inverse problems of  electrical impedance tomography (EIT). In fact, the conversion of rational approximations to finite-difference schemes (and its planar generalization) was the basis for the solution of EIT inverse problems via resistor network approximations \cite{Borcea}. 

Finally, we would like to point out that cloaking problems (which are popular in the inverse problems community) are closely related with the construction of PMLs,
because  the latter can be viewed as cloaking of the point at infinity. Cloaking problems can also  be formulated via complex coordinate transforms \cite{Kohn} and  lead to approximation problems of NtD maps. Although the involved Stieltjes impedance function $F(z)$ is typically different in these applications, techniques similar  to those presented in this paper may still be applicable.}

\section*{Acknowledgments}
We are grateful to  Paul Childs, Martin Gander, Mikhail Zaslavsky, and Hui Zhang for useful discussions. \rev{We thank the anonymous referees for helpful comments and suggestions, and}  David Bailey for making available the Fortran~90 multiprecision system \cite{Bailey}.

\appendix

\section{Zolotarev approximation and proof of Theorem~\ref{T2}}

The solution of the Zolotarev problem (\ref{inZolpr}) can be computed as
\begin{equation}\label{zolotar}Z^{(c,d)}_m(z)=\prod_{j=1}^m(z-s^{(c,d)}_j),\qquad
s^{(c,d)}_j=d\cdot\dn\left(\frac{(2m-2j+1)\Kei(\delta^\prime)}{2m},\delta^\prime\right),
\end{equation}
where
\[
\delta=c/d, \quad \delta^\prime=\sqrt{1-\delta^2},
\]
\begin{equation}\label{eq:ellipke}\nonumber
\Kei(\delta) = \int_0^1 \frac{1}{\sqrt{(1-t^2)(1-\delta^2 t^2)}}\,\mathrm{d}t
\end{equation}
is the
complete elliptic integral of the first kind%
\footnote{The definition of $K(\delta)$ is not
consistent in the literature. We stick to the definition used in
\cite[Ch.~VI]{Neh75}. In \textsc{Matlab} one would type \texttt{ellipke(delta{\textasciicircum}2)}
to obtain the value $K(\delta)$.}
and where the Jacobian elliptic function $\dn$ is defined via another
such function, $\sn$, by the relations
\[
\dn(u,\kappa)=\sqrt{1-\kappa^2\sn(u,\kappa)}, \qquad
\xi = \sn(u;\kappa), \qquad
u = \int_0^{\xi}\frac{dt}{\sqrt{(1-t^2)(1-\kappa^2t^2)}}.
\]

In order to prove near-optimality results, we first
 need to study the quantity $E^{(c,d)}_m$ in (\ref{inZolpr}) carefully.
Evidently, $E^{(c,d)}_m<1$.
Upper and lower bounds for (\ref{inZolpr}) were given in \cite{ML05} as
\be \label{Ecdest}
\frac{2\exp\left(-\frac{\pi \Kei(\mu^\prime)}{4\Kei(\mu)}m\right)}
{1+\left[\exp\left(-\frac{\pi \Kei(\mu^\prime)}{4\Kei(\mu)}m\right)\right]^2}
\le
E^{(c,d)}_m \le 2\exp\left(-\frac{\pi \Kei(\mu^\prime)}{4\Kei(\mu)}m\right)
\ee
with
\[
\mu=\left(\frac{1-\sqrt\delta}{1+\sqrt\delta}\right)^2 \quad \mbox{and}
\quad \mu^\prime=\sqrt{1-\mu^2}.
\]
Hence the Cauchy--Hadamard convergence rate can be computed as
\be\label{rho12} \rho^{(\delta)}= \exp\left(-\frac{\pi\Kei(\mu^\prime)}{4\Kei(\mu)}\right).\ee
Recalling the equalities (\ref{eq:rho12})
and (\ref{defrho}), let us define the sets
\[
\tK=F(K), \quad \tK_1=F(K_1), \quad \tK_2=F(K_2),
\]
and consider the following auxiliary problem: find a (complex) monic polynomial $H_m$
of degree~$m$ being the minimizer of
\be \label{probl1}
\min_{H\in\mathcal{P}_m}
\max_{s\in\tK} \left|\frac{H(s)}{H(-s)}\right|.
\ee
We now construct an approximate solution of this problem and show that
the approximate solution gives the maximum in (\ref{probl1}) which
yields the best possible functional value up to a moderate multiplier.

 Accounting, as it was done in
\cite[Section~2]{DGH}, that
\[
\left|\frac{Z^{(\sqrt{-b_1},\sqrt{-a_1})}_{m_1}(-is)}
{Z^{(\sqrt{-b_1},\sqrt{-a_1})}_{m_1}(is)}\right|=1
\qquad \mbox{if} \quad
s\in\tK_2
\]
and
\[
\left|\frac{Z^{(\sqrt{a_2},\sqrt{b_2})}_{m_2}(s)}
{Z^{(\sqrt{a_2},\sqrt{b_2})}_{m_2}(-s)}\right|=1
\qquad \mbox{if} \quad
s\in\tK_1
\]
because these polynomials have real coefficients, the polynomial $H_m$ defined in (\ref{defh})
satisfies
\be \label{fsegm}
\max_{s\in \tK_1} \left|\frac{H_m(s)}{H_m(-s)}\right|
= E^{(\sqrt{a_2},\sqrt{b_2})}_{m_2}
\ee
and
\be \label{ssegm}
\max_{s\in\tK_2} \left|\frac{H_m(s)}{H_m(-s)}\right|
= E^{(\sqrt{-b_1},\sqrt{-a_1})}_{m_1}.
\ee

\begin{lemma} \label{T1}
The polynomial $H_m$ defined in (\ref{defh}) satisfies the inequality
\be \label{T1res1}
\max_{s\in\tK} \left|\frac{H_m(s)}{H_m(-s)}\right|
\le 2\max\left\{\rho_1^{-1/2},\rho_2^{-1/2}\right\}\rho^m
\ee
with the numbers $\rho_1,\rho_2$ and $\rho$ defined in \eqref{eq:rho12} and \eqref{defrho},
provided that $m_1,
m_2$ are chosen according to (\ref{m12}).

On the other hand, for any complex polynomial $H\in\mathcal{P}_m$ we have
\be \label{T1res2}
\max_{s\in\tK} \left|\frac{H(s)}{H(-s)}\right|
\ge \rho^m.
\ee
\end{lemma}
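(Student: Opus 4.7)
\medskip

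\noindent\textbf{Proof plan for Lemma~\ref{T1}.}

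The plan splits into the upper bound (\ref{T1res1}) for the explicit Zolotarev‐type construction and the lower bound (\ref{T1res2}) for an arbitrary polynomial of degree $m$.

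\smallskip

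\emph{Upper bound.} The plan is to evaluate $|H_m(s)/H_m(-s)|$ on each component $\tK_j$ separately, then balance the two errors via the choice (\ref{m12}). First I would verify the two key ``orthogonality'' identities already recorded as (\ref{fsegm}) and (\ref{ssegm}): on $\tK_2\subset\bfR_+$ the factor $H_{m_1}(s)=Z_{m_1}^{(\sqrt{-b_1},\sqrt{-a_1})}(-is)$ has all its zeros on the positive imaginary semiaxis, so for real $s$ the moduli $|s-it_j|$ and $|s+it_j|=|-s-it_j|$ coincide, giving $|H_{m_1}(s)/H_{m_1}(-s)|=1$; a symmetric argument takes care of $H_{m_2}$ on $\tK_1$. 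This reduces the max on each $\tK_j$ to the appropriate Zolotarev constant $E^{(\cdot,\cdot)}_{m_j}$. Next I apply the classical upper bound in (\ref{Ecdest}) to obtain $E^{(\cdots)}_{m_j}\le 2\rho_j^{m_j}$. Finally, the choice (\ref{m12}) gives $m_j\log\rho_j=m\log\rho\pm\theta\log\rho_j$ with $|\theta|\le 1/2$, hence $\rho_j^{m_j}=\rho^m\rho_j^{\pm\theta}$; since $\rho_j\in(0,1)$ and $|\theta|\le 1/2$, one has $\rho_j^{\pm\theta}\le\rho_j^{-1/2}$. Assembling these estimates yields (\ref{T1res1}).

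\smallskip

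\emph{Lower bound.} For an arbitrary polynomial $H$ of degree $m$ set $R(s):=H(s)/H(-s)$. The key structural observation is the ``Blaschke-type'' symmetry
\[
R(s)\,R(-s)=1,
\]
so that if $\max_{s\in\tK}|R(s)|=\epsilon$ then automatically $\min_{s\in-\tK}|R(s)|=1/\epsilon$, and therefore
\[
\frac{\min_{s\in-\tK}|R(s)|}{\max_{s\in\tK}|R(s)|}\ge\frac{1}{\epsilon^{2}}.
\]
The plan is then to invoke a Bernstein--Walsh type condenser inequality: for any rational function $R$ of degree $\le m$ and any pair of disjoint compact sets $(E_0,E_1)$ one has
\[
\log\frac{\min_{E_1}|R|}{\max_{E_0}|R|}\le \frac{2m}{c(E_0,E_1)},
\]
where $c(E_0,E_1)$ denotes the condenser capacity. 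Applied with $E_0=\tK$ and $E_1=-\tK$, this gives $\epsilon\ge\exp\bigl(-m/c(\tK,-\tK)\bigr)$.

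\smallskip

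\emph{Main obstacle.} The remaining — and genuinely nontrivial — step is to identify $c(\tK,-\tK)$ with $-1/\log\rho$ for $\rho$ from (\ref{defrho}). The plan is to exploit the fact that $\tK_1\subset i\bfR_+$ and $\tK_2\subset\bfR_+$ sit on \emph{perpendicular} rays through the origin, together with the reflection symmetry $s\mapsto-s$, to show that the equilibrium problem for $(\tK,-\tK)$ decouples into the two single-interval condenser problems for $(\tK_j,-\tK_j)$; this yields the ``parallel connection'' formula
\[
c(\tK,-\tK)=c(\tK_1,-\tK_1)+c(\tK_2,-\tK_2).
\]
Each single-interval condenser capacity is identified with $-1/\log\rho_j$ through the classical Zolotarev/elliptic‐integral computation behind (\ref{rho12}), and summing gives
\[
\tfrac{1}{c(\tK,-\tK)}=\tfrac{-\log\rho_1\cdot-\log\rho_2}{-\log\rho_1+(-\log\rho_2)}=-\log\rho,
\]
whence $\epsilon\ge\rho^m$, which is (\ref{T1res2}). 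The additivity of the capacities is the step that requires care — a clean route is via an explicit construction of the equilibrium measure using the even/odd decomposition $H(s)=A(s^{2})+sB(s^{2})$ and the substitution $w=s^{2}$, which maps both $\tK$ and $-\tK$ onto the same real set $K=K_1\cup K_2$ and turns the problem into a classical two-interval rational approximation problem for $F(z)=z^{-1/2}$, whose Cauchy--Hadamard rate is known from Stahl-type theory to be exactly $\rho$.
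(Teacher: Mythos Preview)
Your upper-bound argument is correct and essentially identical to the paper's: reduce via (\ref{fsegm})--(\ref{ssegm}) to the single-interval Zolotarev errors, apply the upper estimate in (\ref{Ecdest}), and use (\ref{m12}) to balance $\rho_1^{m_1}$ and $\rho_2^{m_2}$.

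For the lower bound, the overall scaffolding---the Blaschke symmetry $R(s)R(-s)=1$ together with a Gonchar/Bernstein--Walsh condenser inequality---is also what the paper uses (it cites \cite[Theorem~1, (11)--(12)]{Gonchar69}). The real difference is how the capacity of $(\tK,-\tK)$ is obtained. The paper does \emph{not} compute it directly. Instead it first observes that, by the symmetry of the condenser, a Cauchy--Hadamard optimal sequence may be taken of the form $H(s)/H(-s)$ with all roots in $\tK$; splitting $H=H^{(1)}H^{(2)}$ according to whether a root lies in $\tK_1$ or $\tK_2$, the perpendicularity of the two axes makes the cross-factor unimodular on the other component (exactly the mechanism of Lemma~\ref{split}), so the single-interval lower bound in (\ref{Ecdest}) applies on each $\tK_j$. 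This yields $\max_{\tK}|H/H(-\,\cdot\,)|\ge\max\{\rho_1^{m_1},\rho_2^{m_2}\}\ge\rho^m$, hence $\exp(-1/\capl(\tK,-\tK))\ge\rho^2$ via Gonchar's formula~(12), and then Gonchar's formula~(11) gives (\ref{T1res2}) for \emph{arbitrary} $H$.

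Your alternative route via the ``parallel-connection'' identity $\capl(\tK,-\tK)=\capl(\tK_1,-\tK_1)+\capl(\tK_2,-\tK_2)$ is in fact correct---the equilibrium potential of each single-axis condenser is identically $1/2$ on the perpendicular axis, so $u_1+u_2-\tfrac12$ solves the full equilibrium problem and the charges add---but you do not supply this argument. The justification you \emph{do} propose has a genuine gap: the substitution $w=s^2$ collapses both plates $\tK$ and $-\tK$ onto the \emph{same} set $K$, so there is no condenser left to analyse, and your appeal to ``Stahl-type theory'' for the statement that the Cauchy--Hadamard rate of rational approximation of $z^{-1/2}$ on $K$ equals exactly the $\rho$ of (\ref{defrho}) is circular---that identification is precisely the content of Theorem~\ref{T2}, which this lemma is meant to underpin. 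Either give the potential-theoretic additivity argument sketched above, or follow the paper's root-splitting approach, which stays within the single-interval Zolotarev estimates already at hand.
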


\begin{proof}
Let $H_m$ be defined as in (\ref{defh}) and conditions~(\ref{m12}) be satisfied. Accounting for
(\ref{fsegm}), (\ref{ssegm}) and (\ref{Ecdest}), we obtain
\[
\max_{s\in\tK} \left|\frac{H_m(s)}{H_m(-s)}\right|
\le 2\max\left\{\rho_1^{m_1},\rho_2^{m_2}\right\}
=2\rho^m\max\left\{\rho_1^\theta,\, \rho_2^{-\theta}\right\},
\]
which gives assertion (\ref{T1res1}).

To prove assertion~(\ref{T1res2}), we consider the third
Zolotarev problem in the complex plane for the condenser
$\big(\tK,-\tK\big)$ (see \cite{Gonchar69},
\cite[\S~8.7]{Walsh} or \cite[\S~VIII.3]{SaffTotik}). Due to the symmetry of the condenser, the two measures forming the (unique) equilibrium pair for
$\big(\tK,-\tK\big)$ are symmetric to each other in the evident
sense. Thus, one can choose an (in the Cauchy--Hadamard sense) optimal
 sequence of type $(m,m)$ rational functions of the
form $H(s)/H(-s)$, $\deg(H)=m\geq 1$, such that the
roots $s_j$ ($1\le j \le m$) of each polynomial $H$ belong to $\tK$. Define
\beas
H^{(1)}(s)=\prod_{\substack{1\le j\le m\\  s_j\in\tK_1}}(s-s_j), \qquad \deg( H^{(1)} )=m_1,\\[2mm]
H^{(2)}(s)=\prod_{\substack{1\le j\le m\\ s_j\in\tK_2}}(s-s_j), \qquad \deg( H^{(2)} )=m_2,\\
m_1+m_2=m.\ \,
\eeas
By virtue of (\ref{Ecdest}) and the location of the roots we have
\[
\max_{s\in\tK_1}\left|\frac{H(s)}{H(-s)}\right|
=\max_{s\in\tK_1}\left|\frac{H^{(1)}(s)}{H^{(1)}(-s)}\right|
\ge\frac{2\rho_1^{m_1}}{1+\rho_1^{2m_1}}
\]
and
\[
\max_{s\in\tK_2}\left|\frac{H(s)}{H(-s)}\right|
=\max_{s\in\tK_2}\left|\frac{H^{(2)}(s)}{H^{(2)}(-s)}\right|
\ge\frac{2\rho_2^{m_2}}{1+\rho_2^{2m_2}}\, ,
\]
whence
\beas
\max_{s\in\tK}\left|\frac{H(s)}{H(-s)}\right|\cdot
\max_{s\in-\tK}\left|\left[\frac{H(s)}{H(-s)}\right]^{-1}\right|
=\max_{s\in\tK}\left|\frac{H(s)}{H(-s)}\right|^2
\ge \max\left\{\frac{2\rho_1^{m_1}}{1+\rho_1^{2m_1}},
\frac{2\rho_2^{m_2}}{1+\rho_2^{2m_2}}\right\}^2 \\
\ge \max\left\{\rho_1^{2m_1},\rho_2^{2m_2}\right\}.
\eeas
Since the quantity $\max\left\{\rho_1^{2x_1},\rho_2^{2x_2}\right\}$ under
the conditions $x_1\ge0$, \ $x_2\ge0$, \ $x_1+x_2=m$ is minimal at
\[
x_1=m\cdot \frac{\log\rho_2}{\log\rho_1+\log\rho_2},
\qquad
x_2=m\cdot \frac{\log\rho_1}{\log\rho_1+\log\rho_2},
\]
we obtain
\[
\max_{s\in\tK}\left|\frac{H(s)}{H(-s)}\right|\cdot
\max_{s\in-\tK}\left|\left[\frac{H(s)}{H(-s)}\right]^{-1}\right|
\ge \rho^{2m}
\qquad\mbox{as}\quad m\to\infty,
\]
so
\[
\liminf_{m\to\infty}
\left(
\max_{s\in\tK}\left|\frac{H(s)}{H(-s)}\right|\cdot
\max_{s\in-\tK}\left|\left[\frac{H(s)}{H(-s)}\right]^{-1}\right|
\right)^{1/m}
\ge \rho^2.
\]
It follows in view of \cite[Theorem~1, Formula~(12)]{Gonchar69} that the
logarithmic capacity of our condenser satisfies
\[
\exp\left(-1/\capl\big(\tK,-\tK\big)\right)\ge\rho^2.
\]
Moreover, \cite[Theorem~1, Formula~(11)]{Gonchar69} yields for all $H,G\in\mathcal{P}_m$
\[
\max_{s\in\tK}\left|\frac{H(s)}{G(s)}\right|\cdot
\max_{s\in-\tK}\left|\frac{G(s)}{H(s)}\right|
\ge \rho^{2m},
\]
from which (\ref{T1res2}) follows.\hfill
\end{proof}

We are now prepared to conclude the proof of Theorem~\ref{T2}.

\begin{proof}
To establish (\ref{T2res1}), it suffices to note that
\[ 
\max_{z\in K} \left|\frac{R_n(z)}{F(z)}-1\right|
=\max_{s\in\tK} \left|\frac{s\, P_{n-1}(s^2)}{Q_n(s^2)}-1\right|,
\]
and to apply (\ref{T1res1}) from Lemma~\ref{T1}, condition
(\ref{T2cond}), and a consequence of (\ref{apprerr}) for finding
\[
\left|\frac{sP_{n-1}(s^2)}{Q_n(s^2)}-1\right|
=\frac{2\left|\frac{H_m(s)}{H_m(-s)}\right|}
{\left|1+\frac{H_m(s)}{H_m(-s)}\right|}\, .
\]

To justify (\ref{T2res2}), set $z=s^2$ and   assume that for
some pair $(P,Q)$ and $R=P/Q$  we have the
inequality
\[
\max_{z\in K} \left|\frac{R(z)}{F(z)}-1\right|
< \frac{2\rho^m}{1+\rho^m}.
\]
Define $H$ by means of (\ref{defpq}) and rewrite the equality
(\ref{apprerr}) in the form
\[
\frac{H(s)}{H(-s)}=- \frac{\frac{R(z)}{F(z)}-1}
{\left(\frac{R(z)}{F(z)}-1\right)+2}.
\]
We readily derive
\[
\max_{s\in\tK}\left|\frac{H(s)}{H(-s)}\right|
< \frac{\frac{2\rho^m}{1+\rho^m}}{2-\frac{2\rho^m}{1+\rho^m}}
=\rho^m,
\]
which contradicts (\ref{T1res2}) and thereby proves the assertion (\ref{T2res2}).\hfill
\end{proof}

\end{document}